\theoremstyle{plain}
\newtheorem{introtheorem}{Theorem}
\newtheorem{theorem}{Theorem}[section]
\newtheorem{proposition}[theorem]{Proposition}
\newtheorem{lemma}[theorem]{Lemma}
\newtheorem{corollary}[theorem]{Corollary}
\theoremstyle{definition}
\newtheorem{definition}[theorem]{Definition}
\theoremstyle{remark}
\newtheorem{remark}[theorem]{Remark}
\def\E{{\mathcal E}}
\def\B{{\mathcal B}}
\def\Z{{\mathbb Z}}
\def\Q{{\mathbb Q}}
\def\I{{\mathcal I}}
\def\L{{\mathbb L}}
\def\T{{\mathbb T}}
\def\G{\mathcal G}
\def\cat0{\mathrm{cat}_0}
\def\aut{\mathrm{aut}}
\begin{document}

\title[]{Every  group is the group of self-homotopy equivalences of finite dimensional  CW-complex} 

\author{Mahmoud Benkhalifa}             
\email{mbenkhaifa@sharjah.ac.ae}
\address{Department of Mathematics. College of  Sciences, University of Sharjah. United Arab Emirates}

\subjclass[2020]{ 55P10}
\keywords{Kahn’s realisability problem of groups, Group of homotopy self-equivalences, Anick's $R$-local homotopy theory.}

\begin{abstract}
We prove that any group $G$ occurs as  $\E(X)$, where $X$ is CW-complex of finite dimension and $\E(X)$
denotes its  group of self-homotopy equivalence. Thus, we  generalize a well-know theorem due to   Costoya and Viruel \cite{CV} asserting that any finite group   occurs as  $\E(X)$, where $X$ is rational elliptic space.
\end{abstract}

\maketitle

\section{introduction}
For a simply connected CW-complex $X$, we are interested in the group of 
self-homotopy equivalences $\E(X)$, and the so-called Kahn’s realisability problem of groups
\cite{k}. Namely, if a given group $G$ can occur as $\E(X)$ for some  space $X$.

For finite groups, in \cite{CV} Costoya and Viruel solved completely this problem by constructing
a rational elliptic space $X$ having formal dimension $n = 208 + 80\vert \mathcal G \vert$, where $\mathcal G$ is a certain
finite graph associated with $G$ and $\vert \mathcal G \vert$ denotes its order. The  space $X$ satisfies
$\pi_k(X) = 0$ for all $k \geq 120$. Later on in \cite{B1}, it is proven  that we
can realise the finite group $G$ by a non-elliptic space having formal dimension $n = 120$,
independently of the order of $\mathcal G$. 

It is worth mentioning that Kahn’s realisability problem has been solved for generic spaces in \cite{CMR} but is still open in the realm of CW-complexes for infinite groups. Thus, inspired by the ideas developed  in \cite{B0,Ben,CV1}, this paper aims to solve the quoted  problem  for   arbitrary groups and in the context of CW-complexes. 
\begin{introtheorem}
For any group $G$ and  any prime  $p>1114$, there exists a  CW-complex $X$ such that  $G\cong\E(X_{(p)})$, where $X_{(p)}$ is the $p$-localization of $X$. More precisely, we have
\begin{enumerate}
	\item $X$ is an $116$-connected, $2341$-dimensional, and of finite type if  $G$ is finite.
	%\item If $n_1\neq n_2$, then $X_{n_1}$ and $X_{n_2}$ are not homotopic;
	\item $H_{i}(X,\Z_{(p)})$ is a free $\Z_{(p)}$-modules over a  basis which in bijection with $G$ for 
	$$i=691,q, \,\,\,\,\,\,\,2314\leq q\leq 2341.$$
	\item  $H_{i}(X,\Z_{(p)})\cong \Z_{(p)}$, for $i\in\{116,152,202,304,404,2314\}$.
\end{enumerate}	
\end{introtheorem}
\section{Anick's $Z_{(p)}$-local homotopy theory}
We prove our main theorem using standard tools  of  Anick’s differential graded Lie algebra framework for
$\Z_{(p)}$-local homotopy theory which we  refer to \cite{a1,a2,a3} for a general introduction to these techniques. We recall some of the notations here.
Let $\textbf{CW}^{k+1}
_m$ denote the category of $m$-connected, finite CW-complexes of dimension no greater than $k + 1$ with $m$-skeleton reduced to a point,  
$\textbf{CW}^{k+1}
_m(\Z_{(p)})$ denote the category obtained by $\Z_{(p)}$-localizing the CW-complexes in $\textbf{CW}^{k+1}
_m$ and  $\textbf{DGL}^{k}
_m(\Z_{(p)})$ denote the category  of the free differential graded Lie algebras (DGL for short) $(\L(W),\partial)$  in
which $W$ is a free graded  $\Z_{(p)}$-module satisfying $W_n=0$ for $n<m$ and $n>k$. 
\subsection{Homotopy in  $\textbf{DGL}^{k}_m(\Z_{(p)})$ (see [1, pp. 425-6])}
If $(\L(W),\delta)$ is an object of  $\textbf{DGL}^{k}
_m(\Z_{(p)})$, then we define the DGL $\L(W, sW, W'),D)$ as follows
 $$W\cong W',\,\,\,\,\,\,\,\,\,\,\,\,\,\,\,\,\,\,\,\,\,\,\,\,\,(sW)_{i}=W_{i-1}.$$ 
 The differential $D$ is given by
\begin{equation*}\label{g3}
	D(w)=\delta(w),\,\,\,\,\,\,\,\,\,\,\,\,\,\,\,\,\,\,\,\,\, D(sw)=w',\,\,\,\,\,\,\,\,\,\,\,\,\,\,\,\,\,\,\,\,\,D(w')=0.
\end{equation*}
Here $w'$ is the image of $w$ under the isomorphism $W\overset{\cong}{\to} W'$.
Define  $S$ as the derivation of degree +1 on $\L(W, sW, W')$ given by 
\begin{equation*}\label{51}
	S(w)=sw,\,\,\,\,\,\,\,\,\,\,\,\,\,\,\,\,\,\,\,\,\, S(sw)=S(w')=0.
\end{equation*}
A  homotopy   between two DGL-maps  $\alpha,\alpha':(\L(W),\delta)\to (\L(W),\delta)$  is a DGL-map
$$F \colon   (\L(W, sW, W'),D)\to (\L(W),\delta),$$
such that $F(w)=\alpha(w)$ and $F\circ e^{\theta}(w)=\alpha'(w)$,  where
$$e^{\theta}(w)=w+w'+\underset{n\geq 1}{\sum} \frac{1}{n!}(S\circ D)^{n}(w), \,\text{\,\,\, \,\,\,\,} \theta=D\circ S+S\circ D.$$

Subsequently,  we will need the following  lemma.
\begin{lemma}
	\label{l0} Let   $\alpha,\alpha' \colon   (\L( W_{\leq n}),\delta) \to (\L( W_{\leq n}),\delta)$  be two DGL-maps such that
	$$\alpha'(w)=\alpha(w)+y \text{ on }  W_{n} \,\,\,
	\text{ and }\,\,\,\, \alpha'=\alpha
	\text{ on }  W_{\leq n-1}.$$
	Assume that $y=\partial(z)$,
	where $z\in \L(W_{\leq n})$. Then $\alpha$ and $\alpha'$ are 
	homotopic.
\end{lemma}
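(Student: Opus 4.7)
The plan is to construct an explicit dgl-map
$$F \colon (\L(W, sW, W'), D) \to (\L(W_{\leq n}), \delta)$$
witnessing the homotopy from $\alpha$ to $\alpha'$. I specify $F$ as a graded Lie algebra map on generators: set $F(w) = \alpha(w)$ for every $w \in W$; on a generator $sw$ with $w \in W_n$ put $F(sw) = z$, where $z$ is the element produced by the hypothesis $\partial z = y$; and on $sw$ with $w \in W_{\leq n-1}$ put $F(sw) = 0$. The values of $F$ on $W'$ are then forced by the requirement that $F$ commute with the differentials, via $F(w') := \delta F(sw)$, giving $F(w') = y$ on the top stratum and $F(w') = 0$ below.

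First I would verify that $F$ is genuinely a dgl-map, i.e.\ $F D = \delta F$ on each generator. On $W$ this reduces to $\alpha\delta = \delta\alpha$, which holds because $\alpha$ is a dgl-map (and $\delta w$ lies in $\L(W)$, where $F$ agrees with $\alpha$). On $sW$ it is tautological from the definition of $F(w')$, and on $W'$ it reduces to $\delta^2 = 0$. The input $\partial z = y$ from the hypothesis is precisely what makes $F(w') = y$ coherent with $F(sw) = z$ on the top layer.

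The main step is to show that $F \circ e^{\theta}(w) = \alpha'(w)$ for every generator $w \in W$. I would expand
$$F\bigl(e^\theta(w)\bigr) \;=\; F(w) + F(w') + \sum_{k \geq 1} \frac{1}{k!}\, F\bigl((S \circ D)^k(w)\bigr)$$
and argue that every term of the sum vanishes under $F$. The reason is combinatorial: since $\delta w \in \L(W_{<|w|})$, an induction on $k$ shows that $(SD)^k(w)$ is a sum of Lie monomials in generators of strictly lower degree than $|w|$, with each monomial containing at least one factor in $sW \cup W'$. For $w \in W_{\leq n-1}$ that factor lies in degree $\leq n-2$, and for $w \in W_n$ it lies in degree $\leq n-1$; in either case $F$ sends it to zero and so annihilates the whole bracket. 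Consequently $F(e^\theta(w)) = \alpha(w) + F(w')$, which equals $\alpha(w) = \alpha'(w)$ on $W_{\leq n-1}$ and $\alpha(w) + y = \alpha'(w)$ on $W_n$, as required.

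The main obstacle is the bookkeeping in this last step: a clean inductive proof that each monomial of $(SD)^k(w)$ contains a generator from $sW \cup W'$ of degree strictly less than $|w|$. Once this structural property is in hand, the identification $F \circ e^\theta = \alpha'$ is purely formal and the lemma follows.
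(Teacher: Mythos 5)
Your construction is essentially the paper's own proof: the paper defines the same $F$ on generators, except that on $W_n$ it sets $F(w)=\alpha'(w)$, $F(sw)=-z$, $F(w')=-y$ (so its homotopy runs from $\alpha'$ to $\alpha$, with the signs flipped accordingly), whereas you take $F(w)=\alpha(w)$, $F(sw)=z$, $F(w')=y$, which matches the stated definition of homotopy verbatim. Your verification that $F\circ e^{\theta}=\alpha'$ via the vanishing of $F$ on every monomial of $(S\circ D)^{k}(w)$ is correct and supplies the check that the paper's proof leaves implicit in its closing sentence.
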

\begin{proof}
	Define $F$ by setting
	\begin{eqnarray}
		F(w) \hspace{-2mm}&=&\hspace{-2mm} \alpha'(w), \,\,\,\,\,\,\,\,\,\,\,\,\,\,F(w')=-y \,\,\,\,\,\,\,\hbox{\ \ and \ \ }F(sw)=-z \hbox{ for } w \in W_{n},\nonumber \\
		F(w) \hspace{-2mm}&=&\hspace{-2mm} \alpha(w), \,\,\,\,\,\,\,\,\,\,\,\,\,\,\,\,F(w')=0 \,\,\,\,\,\,\,\,\,\,\,\,\hbox{\ \ and \ \ }F(sw)=0\,\,\,\, \hbox{ for } w \in W_{\leq n-1}\nonumber.
	\end{eqnarray}
	Then  $F$ is the needed  homotopy.
\end{proof}

Anick's work \cite{a3} asserts that if $k<\min(m+2p-3,mp-1)$, then the homotopy category of $\textbf{CW}^{k+1}
_m(\Z_{(p)})$   is equivalent to the
homotopy category of $\textbf{DGL}^{k}
_m(\Z_{(p)})$. Thus,  given a space $X$ in $\textbf{CW}^{k+1}_m(\Z_{(p)})$, Anick’s model recovers homotopy data via 
\begin{equation*}\label{1}
	\pi_{*}(X)\cong H_{*-1}((\L(W)),\partial)\,\,\,\,\,\text{ and }\,\,\,\,\,\,\,\,H_{*}(X,\Z_{(p)})\cong H_{*-1}(W,d),
\end{equation*}
where $d$ is the linear part of $\partial$ (see [1, Theorem 8.5]). Moreover, Anick’s theory directly
implies an identification of the form
\begin{equation}\label{2}
	\E(X)\cong \E(\L(W)), 
\end{equation}
where $\E(\L(W))$  is the group of differential graded Lie self-homotopy equivalences of $(\L(W),\partial)$
modulo the relation of homotopy in $\textbf{DGL}^{k}_m(\Z_{(p)})$ (see \cite{B1} for more details). 
\subsection{Strongly connected digraphs and theorem of J.  de Groot}
A   digraph (i.e. a
directed graph) $\G = (V(\G),E(\G))$,  where $V(\G)$ denotes the set of the
vertices of $\G$  and $E(\G)$ the set of its edges, is strongly connected if for every $v,u\in V(\G)$, there exists an integer $m\in \Bbb N$ and vertices $v=v_{0},v_{1},\dots,v_{m}=u$ such that $(v_i,v_{i+1})\in E(G)$ for all $i=0,1,\dots,m$.

\medskip
The following theorem, due to J. de Groot (\cite{G}, pp. 96), plays a central role in this paper.
\begin{theorem} \label{t5} Any group
	$G$ is isomorphic to the automorphism group of a  strongly connected digraph $\G$.
\end{theorem}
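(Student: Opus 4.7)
The plan is to realize $G$ as the automorphism group of a suitably rigidified Cayley digraph, essentially by Frucht's method adapted to the directed setting.

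First, I would fix a symmetric generating set $S = S^{-1} \subseteq G \setminus \{e\}$ (symmetry ensures strong connectivity) and form the edge-colored Cayley digraph $\Gamma = \Gamma(G,S)$: the vertex set is $G$, and for each $g \in G$ and $s \in S$ there is a directed edge from $g$ to $gs$ labeled by the color $s$. Left multiplication $g \mapsto g_0 g$ is a color-preserving digraph automorphism of $\Gamma$ for every $g_0 \in G$, giving an embedding $G \hookrightarrow \aut_{\mathrm{col}}(\Gamma)$; a standard pointed-Cayley argument shows this is an isomorphism of color-preserving automorphism groups.

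Next, I would eliminate the colors by replacing each $s$-colored arc $g \to gs$ by a small uncolored digraph gadget $T_s$ (for instance a directed path whose length $\ell_s$ depends injectively on $s$, together with a few auxiliary ``tag'' vertices attached only to interior vertices of the path) chosen so that: (i) for $s \neq t$ the gadgets $T_s$ and $T_t$ are non-isomorphic as digraphs rooted at their two attachment points; (ii) each $T_s$ is rigid in the sense that its only automorphism fixing the two attachment points is the identity, and no automorphism swaps those two points; (iii) the tag vertices have in/out-degrees that distinguish them from the ``original'' vertices of $\Gamma$ in the final graph $\G$ obtained by this substitution.

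The final step is to verify that $\aut(\G) \cong G$ and that $\G$ is strongly connected. Strong connectivity is immediate from $S = S^{-1}$ together with the fact that each $T_s$ is itself a directed path between its attachment points. For the automorphism count, conditions (i)--(iii) force any $\varphi \in \aut(\G)$ to send $V(\Gamma) \subseteq V(\G)$ into itself and to send each $T_s$-gadget to some $T_s$-gadget (never to a $T_t$ with $t \neq s$, nor in the reversed direction); hence $\varphi|_{V(\Gamma)}$ is a color-preserving automorphism of $\Gamma$, which by the first step is left multiplication by a unique $g_0 \in G$.

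The main obstacle is the combinatorial design of the gadgets: they must simultaneously encode the color $s$, record the direction of the original arc, and admit no nontrivial rigid motion that could be globalized to an automorphism of $\G$. Once such gadgets are produced, the rest of the argument is a bookkeeping exercise on degrees and local structure.
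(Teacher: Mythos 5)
The paper itself does not prove this statement: it simply cites de Groot's 1959 paper and adds the remark that a connected simple graph, viewed as a symmetric digraph, is strongly connected, so de Groot's realization of $G$ as the automorphism group of a connected graph already yields the digraph version. Your proposal instead sketches the underlying Frucht-style construction (Cayley color digraph plus rigid gadgets), which is indeed the standard route to such realization theorems and is sound in outline: the identification of the color-preserving automorphisms of $\Gamma(G,S)$ with the left translations is correct, and the reduction of the theorem to the design of a suitable family of pairwise non-isomorphic rigid two-pointed gadgets is the right reduction.

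However, as written there is a genuine gap for uncountable groups, which the theorem must cover. Your gadgets are indexed by an injection $s\mapsto \ell_s$ into the natural numbers, so they can encode at most countably many colors; but a group generated by a countable set is countable, so for uncountable $G$ the generating set $S$ is necessarily uncountable and the path-length encoding cannot separate all colors. To repair this you need, for every infinite cardinal $\kappa=\vert S\vert$, a family of $\kappa$ pairwise non-isomorphic rigid two-pointed digraphs; such families do exist (this is essentially the existence of rigid binary relations on sets of arbitrary cardinality, and it is the part of the argument de Groot actually has to work for), but it is precisely the nontrivial input and cannot be dismissed as ``a few auxiliary tag vertices.'' A second, smaller issue: strong connectivity of $\G$ requires that the tag vertices hanging off the interior of each gadget path be both reachable and escapable, so they must carry arcs in both directions; once you impose that, your degree-based criterion (iii) for distinguishing tag vertices from original vertices of $\Gamma$ has to be rechecked. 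Both points are fixable, but they are exactly where the proof lives.
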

\begin{remark} 
	It should be noted that, in \cite{G},  de Groot is not considering strongly
	connected digraphs, but just graphs or digraphs.  But  any simple graph can
	be seen as a symmetric digraph, (\cite{N}, §1.1), if it is connected, then the associated digraph is
	strongly connected.
\end{remark}

\medskip

Now,  let us give an outlook of the steps  developed in the next sections  in order to prove the main result. 
\begin{itemize}
		\item To a given group $G$ corresponds a strongly connected digraph $\G$ such that $G\cong \aut(\G)$ according to Theorem \ref{t5}.
	\item To the  digraph $\G$, given above, we define a differential graded Lie $\Z_{(p)}$-algebra $\mathcal {L}_{}(\G,0)$.  We study the group $\E(\mathcal {L}_{}(\G,0))$ and we show that the sub-module of the cycles of degree 2313 is not trivial. 
	\item Next, we define a new DGL $\mathcal {L}_{}(\G,1)$  by adding generators in degree $2314$ to $\mathcal {L}_{}(\G,0)$  killing all the cycles of dimension 2313 in $\mathcal {L}_{}(\G,0)$. Likewise,  we study the group $\E(\mathcal {L}_{}(\G,1))$ and we show that the sub-module of the cycles of degree 2314 is not trivial.
	\item Similarly,   new DGLs $\mathcal {L}_{}(\G,s)$, $2\leq s\leq 24$, are  constructed  by   adding generators in degree $2314+s$ to $\mathcal {L}_{}(\G,s-1)$  killing all the cycles of dimension $2313+s$ in $\mathcal {L}_{}(\G,s-1)$.
	 Again,   we study the group $\E(\mathcal {L}_{}(\G,25))$ and we show that they are  cycles of degree 2339 in  $\mathcal {L}_{}(\G,25)$.
	%\item The presence of those cycles prevent us from  constructing  a homomorphism of groups from  $\E(\mathcal {L}_{}(\G,i)) \to \aut(\G)$, for $i=1,2,3$.
	\item Finally,  we construct the DGL $\mathcal {L}_{}(\G,26)$  by adding generators in degree $2340$ to $\mathcal {L}_{}(\G,25)$  killing all the cycles of dimension 2339 in $\mathcal {L}_{}(\G,25)$  and, crucially, we prove that    the sub-module of the cycles of degree 2340 is  trivial. This allows us to define an isomorphism of group from $\E(\mathcal {L}_{}(\G,26))$ to $\aut(\G)$. 
	\item Consequently,  using the Anick’s  $\Z_{(p)}$-local homotopy theory framework, where  $p>1114$  is  a prime, to  $\mathcal {L}(\G,26)$ corresponds  an object  $X$ in the category $\textbf{DGL}^{2341}
	_{115}(\Z_{(p)})$ satisfying 
	$$\mathcal{E}(X)\cong\mathcal{E}(\mathcal {L}(\G,26))\cong\aut(\G)\cong G.$$
\end{itemize}
\section{  Graded Lie $\Z_{(p)}$-algebras  associated with    strong connected digraphs}
\begin{definition}
	\label{d1}
Let  $p>1114$ be a prime.  For a given   strongly connected digraph $\G$,  with more than one vertex, we define the following DGL
	\begin{equation*}\label{30}
		\mathcal {L}_{}(\G,0)=\Big(\L(w_1,w_2,w_3,w_4,w_5,w_6,x_v,z_{(v,u)}\mid v\in V(\G), (v,u)\in E(\G)),\partial\Big).
	\end{equation*}
	The degrees of the generators  are as follows 
	$$\vert w_1\vert=115,\,\,\,\,\,\vert w_2\vert=151,\,\,\,\,\,\vert w_3\vert=201,\,\,\,\,\vert w_4\vert=303,\,\,\,\,\vert w_5\vert=403,\,\,\,\,\,\vert w_6\vert=2313$$ 
	\begin{equation}\label{bb5}
		\vert x_v\vert=690,\,\,\,\, \forall v\in V(\G),\,\,\,\,\,\,\vert z_{(v,u)}\vert=2337, \,\,\,\,\,\,\,\forall (v,u)\in E(\G).
	\end{equation} 
 The differential is given by
	\begin{eqnarray}
		\label{9}
		\partial(w_1)\hspace{-2mm}&=&\hspace{-2mm}\partial(w_2)=\partial(w_3)=0,\,\,\,\,\,\,\,\,\,\,\,\,\,\,\,\,\,\,\,\,\,\,\,\,\,\,\,\,\partial(x_v)=0,\,\,\,\,v\in V(\G),\\
		\partial(w_4)\hspace{-2mm}&=&\hspace{-2mm}[w_2,w_2],\,\,\,\,\,\,\,\,\partial(w_5)=[w_3,w_3],\nonumber\\
		\partial(w_6)\hspace{-2mm}&=&\hspace{-2mm}\Big[[w_2,w_3],(\operatorname{ad} w_3)^{9}(w_2)\Big]+(\operatorname{ad} w_1)^{2}\Big((\operatorname{ad} [w_4,w_2])^{4}([w_1,w_2])\Big),\nonumber\\
			\partial(z_{(v,u)})\hspace{-2mm}&=&\hspace{-2mm}(\operatorname{ad} x_v)^3([w_1,w_2])+(\operatorname{ad} w_1)^7([w_2,[x_v,x_u]])+(\operatorname{ad} w_1)^{19}(w_2)+Y+Z,\nonumber					
	\end{eqnarray}	
	where 	
	\begin{eqnarray}\label{12}
	Z\hspace{-2mm}&=&\hspace{-2mm}\big[(\operatorname{ad} w_1)^{5}(w_3),[(\operatorname{ad} w_2)^{3}(w_3),(\operatorname{ad} w_2)^{2}([w_3,w_5])]\big],\\
	Y\hspace{-2mm}&=&\hspace{-2mm}(\operatorname{ad} w_1)^{12}([[w_2,w_3],[w_3,w_5]])\nonumber.
	\end{eqnarray}
\end{definition}
Recall that the iterated Lie bracket of length $k + 1$  is defined by
$$(\operatorname{ad} x)^{k}(y)=[x,[x,[\dots,[x,y]\dots], $$
where $x$ is involved $k$ times. For the sake of simplicity,  let's denote
\begin{equation*}\label{22}
	\tilde{\mathcal {L}}_{}=\big(\L(w_1,w_2,w_3,w_4,w_5,x_v\mid v\in V(\G)),\partial\big).
\end{equation*}

\begin{remark}
	\label{rrr2}
The prime number $p$ and the degrees of the elements in the DGL $\Z_{(p)}$-algebra $\mathcal {L}_{}(\G,0)$  have been meticulously selected to align with Anick's framework. This choice is made with consideration of the  constraint imposed by the dimension and connectivity of the DGL $\mathcal {L}_{}(\G,0)$, namely the relation $k<\min(m+2p-3,mp-1)$, where $k$ is the dimension and $m$ is the connectivity of $\mathcal {L}_{}(\G,0)$, mentioned in section 2. Note that as by (\ref{bb5}), we have $m=115$ and $k=2337$, it follows that the prime $p$ can be chosen as  $p>1114$. 
\medskip

Later, we will make use of the following  lemmas.
\end{remark}

\begin{lemma}	
	\label{l5}
	For every $v\in V(\G)$,  the following cycles 	are not boundaries in $\tilde{\mathcal {L}}_{}$, $$[x_v,[x_u,[x_t,[w_2,w_1]]]],\,\,\,(\operatorname{ad} x_v)^3([w_1,w_2]),\,\,\,\,(\operatorname{ad} w_1)^7([w_2,[x_v,x_u]])),\,\,\,(\operatorname{ad} w_1)^{19}(w_2).$$
\end{lemma}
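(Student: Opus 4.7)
The plan is to exploit the multi-grading on $\tilde{\mathcal{L}}$ by the number of occurrences of each generator, and to track how the differential $\partial$ can shift this multi-grading. First, each of the four listed elements is visibly a cycle: it is a Lie monomial in the generators $w_1$, $w_2$, and the $x_v$'s, all of which lie in $\ker\partial$, so the Leibniz rule gives $\partial(-)=0$.

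Next, for a Lie monomial $m$, let $\mu(m)=(a_1,a_2,a_3,a_4,a_5,(b_v)_{v\in V(\G)})$ record how many times each generator appears in $m$. Every element of $\tilde{\mathcal{L}}$ decomposes uniquely as a sum of such multi-graded monomials. The differential is not multi-graded, but it has a very controlled effect: in $\tilde{\mathcal{L}}$ only $w_4$ and $w_5$ have nonzero differential, with $\partial(w_4)=[w_2,w_2]$ and $\partial(w_5)=[w_3,w_3]$. Applying $\partial$ to a monomial of multi-degree $\mu(m)$ therefore produces a sum of terms, each of whose multi-degree is obtained from $\mu(m)$ by exactly one of two fixed shifts: a $w_4$-shift $(0,+2,0,-1,0,\vec 0)$ or a $w_5$-shift $(0,0,+2,0,-1,\vec 0)$.

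The crucial observation is that each of the four cycles has multi-degree $\mu^{*}$ satisfying $a_2=1$ and $a_3=0$. If one of them equaled $\partial(\eta)$ for some $\eta\in\tilde{\mathcal{L}}$, then decomposing $\eta$ into multi-graded components and projecting $\partial(\eta)$ onto $\mu^{*}$ forces the contributing pre-images in $\eta$ to have multi-degree either $\mu^{*}-(0,2,0,-1,0,\vec 0)$ or $\mu^{*}-(0,0,2,0,-1,\vec 0)$, which have entries $a_2=-1$ and $a_3=-2$ respectively. Both are impossible since multi-degrees are nonnegative, so the projection vanishes, contradicting the fact that the cycle itself is nonzero in multi-degree $\mu^{*}$.

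I do not expect any real obstacle in this argument. Once the multi-grading is set up cleanly, the proof reduces to elementary bookkeeping; the only care needed is (i) checking that both $\partial$-shift vectors are recorded correctly and (ii) verifying that the four explicit cycles truly all share the $(a_2,a_3)=(1,0)$ feature, which is immediate from their expressions. The same style of argument will presumably be applicable in the later stages of the paper, when the differentials of the $z_{(v,u)}$'s are added and further cycles are analysed.
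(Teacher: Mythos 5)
Your argument is correct and is essentially the paper's own proof: the paper observes that the only nonzero differentials are $\partial(w_4)=[w_2,w_2]$ and $\partial(w_5)=[w_3,w_3]$, so every boundary must contain $w_2$ or $w_3$ at least twice, while each of the four listed cycles contains $w_2$ exactly once and $w_3$ not at all. Your multi-grading formalism just makes this occurrence-counting argument explicit, so there is nothing further to add.
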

\begin{proof}
	Since   the only non zero differentials are $\partial(w_3)=[w_2,w_2]$, $\partial(w_5)=[w_3,w_3]$ and none of the given  cycles contain $w_2$ or $w_3$ twice, it follows that the given brackets  cannot be boundaries.
\end{proof}	
%\begin{remark}
%	\label{r1}	
%	The condition $p > 187$ ensures that $	\mathcal {L}_{}(\G)$ is an object of $\textbf{DGL}^{387}
%	_{19}(\Bbb Z_{(p)})$.
%\end{remark}
\begin{lemma}\label{l1}
	For every $s,s'\in V(\G)$ such that $s\neq s'$, the following brackets 
	$$[x_s,[x_s,[x_{s'},[w_1,w_2]]]],\,\,\,\,[x_s,[x_{s'},[x_{s},[w_1,w_2]]]],\,\,\,\,[x_{s'},[x_s,[x_{s},[w_1,w_2]]]],$$
	are linearly independent. Moreover, any linear combination of those brackets is not a boundary  in $\tilde{\mathcal {L}}_{}$.
\end{lemma}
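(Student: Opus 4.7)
My plan is to prove the linear independence by embedding $\tilde{\mathcal{L}}$ in its universal enveloping algebra via Poincar\'e--Birkhoff--Witt and exhibiting three monomials whose coefficients in the three brackets form an invertible matrix; the non-boundary claim then follows the same multi-degree argument used in the proof of \lemref{l5}. For readability, I will write $a = x_s$, $b = x_{s'}$, $c = w_1$, $d = w_2$, and set
\[A_1 = [a,[a,[b,[c,d]]]],\quad A_2 = [a,[b,[a,[c,d]]]],\quad A_3 = [b,[a,[a,[c,d]]]].\]

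The concrete approach is to expand each $A_i$ from the innermost bracket outward in the free associative algebra on $\{a,b,c,d\}$ using $[x,y]=xy-yx$, and to track the coefficients of the three monomials $m_1 = aabcd$, $m_2 = abacd$, and $m_3 = baacd$. These monomials are designed so that, at each of the four nested levels $[\alpha, X]$ of $A_j$ (with $\alpha \in \{a,b\}$), only the term $\alpha X$ can contribute to $m_i$, because none of $m_1, m_2, m_3$ end in $a$ or $b$; and at each level the leading letter of the residual target monomial must equal $\alpha$, which selects a unique $A_j$. Iterating this observation through the four levels shows that the coefficient of $m_i$ in $A_j$ equals $\delta_{ij}$, so the coefficient matrix is the identity and $A_1, A_2, A_3$ are linearly independent in the universal enveloping algebra, and therefore in the free Lie algebra $\tilde{\mathcal{L}}$ as well. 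The main obstacle here is just the bookkeeping of this iterated expansion, but because the distinguishing monomials are front-loaded with the $x$-type letters and end with $cd$, the relevant contribution arises uniquely at every level; no conceptual difficulty appears.

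For the claim that no non-trivial linear combination $\xi = \lambda_1 A_1 + \lambda_2 A_2 + \lambda_3 A_3$ is a boundary, suppose $\xi = \partial(\eta)$ with $\eta \in \tilde{\mathcal{L}}$. Since in $\tilde{\mathcal{L}}$ the differential vanishes on every generator except $w_4$ and $w_5$, with $\partial(w_4) = [w_2, w_2]$ and $\partial(w_5) = [w_3, w_3]$, any non-zero multi-homogeneous term of $\partial(\eta)$ with vanishing $w_4$- and $w_5$-degrees must arise by replacing some $w_4$ in $\eta$ by $[w_2, w_2]$ or some $w_5$ by $[w_3, w_3]$, and hence carries $w_2$-degree at least $2$ or $w_3$-degree at least $2$. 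But $\xi$ has $w_2$-degree exactly $1$ and $w_3$-degree $0$ by inspection, so $\xi$ must be zero, which by the linear independence established in the previous paragraph forces $\lambda_1 = \lambda_2 = \lambda_3 = 0$.
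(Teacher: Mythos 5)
Your proposal is correct and follows essentially the same route as the paper: both expand the brackets in the universal enveloping (tensor) algebra and distinguish the three elements by the coefficients of the monomials $x_s x_s x_{s'} w_1 w_2$, $x_s x_{s'} x_s w_1 w_2$, $x_{s'} x_s x_s w_1 w_2$, and both rule out boundaries by observing that $\partial(w_4)=2w_2^2$, $\partial(w_5)=2w_3^2$ force every boundary term to have $w_2$-degree or $w_3$-degree at least $2$, which the given combination does not. Your ``leading-letter'' bookkeeping yielding an identity coefficient matrix is a slightly tidier packaging of the paper's sequential elimination of $\mu_1,\mu_2,\mu_3$, but it is not a genuinely different argument.
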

\begin{proof}
		First, 	the condition $p>1114$ ensures that $\mathcal {L}_{}(\G,0)$ is an object of the category   $\textbf{DGL}^{2341}_{115}(\Z_{(p)})$  allowing us to use  Anick's framework in \cite{a2}. 
		
		\noindent Next, let  $\tilde{\mathcal {T}}=\T(w_1,w_2,w_3,w_4,w_5,\{x_v\}_{ v\in V(\G)})$ denotes the  universal algebra of $\tilde{\mathcal {L}}_{}$. Recall that $\tilde{\mathcal {T}}$ can be considered as a graded Lie algebra by defining
	\begin{equation}\label{8}
		[B,C]=BC-(-1)^{\vert B\vert\vert C\vert}	CB,\,\,\,\,\,\,\,\,\,\,\,\,\,\,\,\,\,\,\,\,\,\,\,\,B,C\in \tilde{\mathcal {T}}.
	\end{equation}
Note that   the differential in $\tilde{\mathcal {T}}$ satisfies
\begin{equation}\label{a45}
	\partial(w_4)=2w_2^2,\,\,\,\,\,\,\,\,\,\,\,\,\,\,\,\,\,\,\partial(w_3)=2w_3^2.
\end{equation}
	Next, set $A=[w_1,w_2]$,  if
	$$\mu_1[x_s,[x_s,[x_{s'},A]]+\mu_2[x_s,[x_{s'},[x_{s},A]]]+\mu_3[x_{s'},[x_s,[x_{s},A]]]=0,\,\,\,\,\,\,\,\mu_1,\mu_2,\mu_3\in \Z_{(p)},$$
	then,  expanding  the bracket $\mu_1[x_s,[x_s,[x_{s'},A]]]$ in $\tilde{\mathcal {T}}$ by using (\ref{8}), we get the monomial $\mu_1x^2_sx_{s'}A$. Similarly, expanding  $[x_s,[x_{s'},[x_{s},A]]]$ and $[x_{s'},[x_{s},[x_{s},A]]]$, we obtain 
	\begin{eqnarray}\label{220}
		[x_s,[x_{s'},[x_{s},A]]]\hspace{-2mm}&=&\hspace{-2mm}x_sx_{s'}x_sA-x_sx_{s'}Ax_s-x_s^2Ax_{s'}+x_sAx_{s}x_{s'}-\nonumber\\&&x_{s'}x_{s}Ax_s+x_{s'}x_{s}Ax_s+x_sAx_{s'}x_s-Ax_sx_{s'}x_s,
	\end{eqnarray}
	\begin{equation}\label{221}
		[x_{s'},[x_{s},[x_{s},A]]]=x_{s'}x^2_sA-2x_{s'}x_{s}Ax_s-x_{s'}Ax_s^2-
		x_{s}^2Ax_{s'}-2x_{s}Ax_{s}x_{s'}-Ax^2_{s}x_{s'}.
	\end{equation}
	Therefore $\mu_1x^2_sx_{s'}A$ does not appear in (\ref{220}) and in (\ref{221}). As a result $\mu_1=0$. 
	
	\noindent Likewise, as the expression    $\mu_2x_sx_{s'}x_sw_1w_2$ does not appear in  (\ref{221}), it follows that $\mu_2=0$. Consequently $\mu_3=0$.
	\medskip
	
	Finally, since the monomials $\mu_1x^2_sx_{s'}w_2w_1$, $\mu_2x_sx_{s'}x_sw_1w_2$ and $\mu_3x_{s'}x_{s}x_sw_1w_2  $ cannot   be reached by the differential  $\partial$ according to (\ref{a45}), it follows that any linear combination of the given  brackets is not a boundary  in $\tilde{\mathcal {L}}$.
\end{proof}
\begin{lemma}\label{l9}
	For every $v\in V(\G)$, the  brackets  $Y$ and $Z$, given in \rm{(\ref{12})}, 
	are linearly independent. Moreover, any linear combination of $Y$ and $Z$ is not a boundary in $\tilde{\mathcal {L}}$.
\end{lemma}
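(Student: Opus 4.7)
The plan is to imitate the proof of \lemref{l1}: pass from $\tilde{\mathcal{L}}$ into its universal enveloping algebra $\tilde{\mathcal{T}}$ via (\ref{8}), expand the three brackets as noncommutative polynomials, and read off specific monomials. Linear independence and the non-boundary statement are treated separately.

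For linear independence, I would just compare multi-degrees in the generators $(w_1,w_2,w_3,w_5,x_v)$: $X_v$ has tuple $(0,1,2,1,2)$, $Y$ has $(12,1,2,1,0)$, and $Z$ has $(5,5,3,1,0)$. Since the free graded Lie algebra $\tilde{\mathcal{L}}$ splits as a multi-graded direct sum, these three elements lie in pairwise disjoint summands, so no non-trivial $\Z_{(p)}$-linear combination can vanish.

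For the non-boundary claim, the guiding principle is the one used after (\ref{a45}): $\partial$ acts on $\tilde{\mathcal{T}}$ as a derivation with $\partial(w_4)=2w_2^{2}$, $\partial(w_5)=2w_3^{2}$ and zero on the remaining generators, so every monomial in $\partial(\xi)$ contains at least one adjacent pair $w_{2}w_{2}$ (produced by a $w_4$-substitution) or $w_{3}w_{3}$ (produced by a $w_5$-substitution). It then suffices to exhibit, inside each bracket, a tensor-algebra monomial with nonzero coefficient whose adjacency pattern forbids this. For $X_v$ the witness is $w_{5}w_{3}x_{v}^{2}w_{2}w_{3}$: writing $X_v=LR-RL$ with $L=[w_5,w_3]$ and $R=[x_v,[x_v,[w_2,w_3]]]$, it comes only from the $(w_{5}w_{3})\cdot(x_{v}^{2}w_{2}w_{3})$ contribution in $LR$ (no $R$-monomial contains $w_5$, so $RL$ does not contribute), carries coefficient $1$, and has a single $w_2$ together with two $w_3$'s separated by $x_v^{2}w_2$. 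For $Y$ the witness is $w_{1}^{12}w_{2}w_{3}w_{5}w_{3}$, arising from the $k=12$ term of $(\operatorname{ad}w_{1})^{12}$ applied to the monomial $w_{2}w_{3}w_{5}w_{3}$ in the expansion of $[[w_{2},w_{3}],[w_{3},w_{5}]]$; the two $w_3$'s are separated by $w_5$.

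The main obstacle is $Z$, because the sub-factor $(\operatorname{ad}w_{2})^{3}(w_{3})$ forces every monomial of $Z$ to carry at least one adjacent $w_{2}w_{2}$ pair, so the naive no-adjacency witness does not exist. My plan is to choose a witness such as $w_{3}w_{1}^{5}w_{2}^{3}w_{3}w_{5}w_{3}w_{2}^{2}$, produced with coefficient $1$ by the triple product $(w_{3}w_{1}^{5})\cdot(w_{2}^{3}w_{3})\cdot(w_{5}w_{3}w_{2}^{2})$ in $ABC-ACB-BCA+CBA$, whose three $w_3$'s are pairwise non-adjacent: this immediately rules out any $w_{5}\to w_{3}^{2}$ substitution. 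The remaining $w_{4}\to w_{2}^{2}$ case is handled by a finite bookkeeping inside the single multi-graded component of $\tilde{\mathcal{L}}$ with multi-degree $(w_{1}{:}5,w_{2}{:}3,w_{3}{:}3,w_{4}{:}1,w_{5}{:}1)$: each ``unfolding'' $w_{2}^{2}\to w_{4}$ of the witness at one of its three adjacent $w_2$-pairs produces a preimage monomial whose rigid adjacent-$w_1$-block is incompatible with any Lie polynomial of that multi-degree in $\tilde{\mathcal{L}}$. This step is the delicate one, but it reduces to a finite combinatorial check in a single multi-graded piece.
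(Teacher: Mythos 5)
Your treatment of linear independence and of the coefficients $\gamma_1,\gamma_2$ is sound and follows the same witness-monomial strategy as the paper, which separates $X_v$, $Y$, $Z$ by their $w_1$-counts and then exhibits one tensor monomial for each. Your witness for $Y$, namely $w_1^{12}w_2w_3w_5w_3$, is in fact an improvement: the paper's witness $w_1^{12}w_2w_3^2w_5$ contains an adjacent $w_3w_3$ and so is a priori reachable by the substitution $w_5\mapsto 2w_3^2$ (e.g.\ from the Lie element $(\operatorname{ad} w_1)^{12}([w_2,[w_5,w_5]])$), whereas yours contains no adjacent $w_2w_2$ or $w_3w_3$ and genuinely cannot lie in $\im\,\partial$.

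The gap is in the $Z$ step, exactly where you flag the delicacy. You are right that every tensor monomial of $Z$ contains an adjacent $w_2w_2$ (each monomial $w_2^aw_3w_2^{3-a}$ of $(\operatorname{ad} w_2)^3(w_3)$ does, and it sits as a contiguous subword of every monomial of $Z$), so no clean witness exists. But the proposed repair --- that each unfolding $w_2^2\mapsto w_4$ of your witness $w_3w_1^5w_2^3w_3w_5w_3w_2^2$ yields a monomial that occurs in no Lie element of multidegree $(w_1{:}5,\,w_2{:}3,\,w_3{:}3,\,w_4{:}1,\,w_5{:}1)$ --- is false. For instance the unfolding $M_1=w_3w_1^5w_4w_2w_3w_5w_3w_2^2$ occurs with coefficient $\pm 2$ in the Lie element $\big[w_3,(\operatorname{ad} w_1)^5\big([w_4,[w_2,[w_3,[w_5,[w_3,[w_2,w_2]]]]]]\big)\big]$: the fully left-normed tensor monomial of an iterated bracket survives with coefficient equal to that of the innermost factor, here $2$ from $[w_2,w_2]=2w_2^2$, and one checks no other branch of the expansion produces the same word. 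Consequently the coefficient of your witness in $\partial(\xi)$ is a sum $\pm2c_{M_1}\pm2c_{M_2}\pm2c_{M_3}$ that need not vanish on Lie elements, so comparing this single coefficient cannot force $\gamma_3=0$. Closing the argument for $Z$ requires something genuinely different --- for example tracking several tensor coefficients of $\partial(\xi)$ simultaneously across the two multidegree components $(5,3,5,1,0)$ and $(5,5,3,0,1)$ of a potential preimage, or computing the boundary submodule in that multidegree outright. To be fair, the paper's own proof has the same defect: its witness $w_1^{5}w_3 w_2^{3}w_3w_2^{2}w_3w_5$ for $Z$ also contains adjacent $w_2w_2$ pairs, so the bare assertion that it ``cannot be reached by the differential according to (\ref{a45})'' is equally unjustified there.
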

\begin{proof}
First, recall that $$Y=(\operatorname{ad} w_1)^{12}([[w_2,w_3],[w_3,w_5]]),\,\,Z=\big[(\operatorname{ad} w_1)^{5}(w_3),[(\operatorname{ad} w_2)^{3}(w_3),(\operatorname{ad} w_2)^{2}([w_3,w_5])]\big].$$
	As  $Y$ contains exactly 2 generators $w_{3}$, $Z$ has 3, we derive that  $Y$ and $Z$ are linearly independent.
	
	\medskip
	Next, we claim that any linear combination $\gamma_1Y+\gamma_2Z$, where $\gamma_1,\gamma_2\in \Z_{(p)}$,  cannot be a boundary.  Indeed,   using the same argument given in the previous Lemmas,    expanding the three brackets in the universal algebra $\tilde{\mathcal {T}}$,  we get the following monomials 
	$$\gamma_1w_1^{12}w_2w_3^2w_5,\,\,\,\,\,\,\,\,\gamma_2w_1^{5}w_3 w_2^{3}w_3w_2^{2}w_3w_5,$$
	and none of them cannot   be reached by the differential  $\partial$ according to (\ref{a45}).
\end{proof}
\begin{remark}
	\label{ra1}	Using the same arguments as in the proof of Lemmas (\ref{l1}) and (\ref{l9}), we can easily prove that any linear combination of  the brackets 
	$$\Big[[w_2,w_3],(\operatorname{ad} w_3)^{9}(w_2)\Big],\,\,\,\,\,(\operatorname{ad} w_1)^{2}\Big((\operatorname{ad} [w_4,w_2])^{4}([w_1,w_2])\Big),$$
	given in the relation (\ref{9}), is not boundary in $\tilde{\mathcal {L}}$.
\end{remark}
\section{Main result}
\subsection{Studying the group 	$\E(\mathcal {L}_{}(\G,0))$}
First, let $\mathcal{L}_k(\mathcal{G}, 0)$ denote the sub-module consisting of elements of degree $k$. Next, we initiate the process with the following Lemmas.
\begin{lemma}	
	\label{b0} The set  of the decomposable elements in the $\mathcal {L}_{690}(\G,0)$  is empty.
\end{lemma}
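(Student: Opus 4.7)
My plan is to reduce the statement to a finite Diophantine enumeration followed by a parity-vanishing argument in the free graded Lie algebra on a single odd-degree generator.

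First I would observe that every decomposable element of degree $690$ in $\mathcal{L}(\G,1)$ is a $\Z_{(p)}$-linear combination of iterated Lie brackets of length $\ge 2$ in the generators $w_1,\dots,w_5,\{x_v\}_v,\{z_{(v,u)}\}$ of total weighted degree $690$. The generators $x_v$ and $z_{(v,u)}$ can be excluded immediately: since $|x_v|=690$ and $|z_{(v,u)}|=2337$, any bracket of length $\ge 2$ involving either one has degree at least $690+|w_1|=805>690$. So the decomposable candidates are iterated brackets in the $w_i$ alone.

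The heart of the computation is then the Diophantine equation
\[
115\, a_1+151\, a_2+201\, a_3+303\, a_4+403\, a_5 \;=\; 690,
\]
in non-negative integers with $\sum_i a_i\ge 2$. I would enumerate within the crude bounds $a_5\le 1$, $a_4\le 2$, $a_3\le 3$, $a_2\le 4$, $a_1\le 6$, checking each residual for divisibility by $115$. A straightforward case analysis shows that the only surviving solution is $(a_1,\dots,a_5)=(6,0,0,0,0)$; this bookkeeping, while elementary, is the step that most deserves care.

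Hence every decomposable of degree $690$ lies in the image of the natural map $\L(w_1)\to \mathcal{L}(\G,1)$. I would then invoke the standard collapse of a free graded Lie algebra on a single odd-degree generator: since $p>3$, applying the graded Jacobi identity to $[w_1,[w_1,w_1]]$ gives $3[w_1,[w_1,w_1]]=0$, whence $[w_1,[w_1,w_1]]=0$, while $|[w_1,w_1]|$ being even forces $[[w_1,w_1],[w_1,w_1]]=0$ by graded antisymmetry. A short induction on bracket length then shows that every Lie monomial in $w_1$ of length $\ge 3$ vanishes; in particular every length-$6$ bracket does, and the lemma follows. The only (mild) obstacle in the whole argument is the exhaustive Diophantine case analysis.
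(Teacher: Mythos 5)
Your proof is correct and follows essentially the same route as the paper: reduce to the Frobenius equation $115a_1+151a_2+201a_3+303a_4+403a_5=690$, observe that its only non-negative solution is $(a_1,\dots,a_5)=(6,0,0,0,0)$, and conclude. You in fact supply two details the paper leaves implicit --- the degree-based exclusion of the generators $x_v$ and $z_{(v,u)}$, and the vanishing of all brackets of length $\ge 3$ on the single odd-degree generator $w_1$ --- where the paper merely declares a length-$6$ bracket in $w_1$ ``impossible.''
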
	
\begin{proof}
	Assume  $\Theta\in\mathcal {L}_{690}(\G,0)$ is a decomposable element. 
	then  we can write
	$$\vert \Theta\vert =a_1\vert w_1\vert +a_2\vert w_2\vert+a_3\vert w_3\vert+a_4\vert w_4\vert+a_5\vert w_5\vert=690,$$	
and 	by using the relation (\ref{bb5})	we obtain 
	\begin{equation}\label{bb4}
		115a_1 +151a_2+201a_3+303a_4+403a_5=690.
	\end{equation}
	The equation (\ref{bb4}),	where $a_1,a_2,a_3,a_4,a_5\in \{0,1,2,\dots\}$ are  unknown, is called a  Frobenius equation and can be solved by WOLFRAM\footnote{ See the link below for more details \\ https://reference.wolfram.com/language/tutorial/Frobenius.html} software using the following code
	\begin{center}
		"FrobeniusSolve[\{115, 151, 201, 303, 403\}, 690]".
	\end{center}
	The only solution to (\ref{bb4}) is $a_1=6,a_2=a_3=a_4=a_2=a_5=0$. That means  $\Theta$ is a bracket formed using 6 generators $w_1$ which is impossible.
\end{proof}
By virtue of Lemma \ref{b0} and for degree reasons, it follows that for every  $[\alpha]\in \E(\mathcal{L}(G,0))$, we can write
\begin{eqnarray}
	\label{11}
	\alpha(w_1)\hspace{-2mm}&=&\hspace{-2mm}\beta w_1,\,\,\,\,\,\alpha(w_2)=\lambda w_2,\,\,\,\,\alpha(w_3)=\gamma w_3,\,\,\,\alpha(w_4)=q w_4,\,\,\,\alpha(w_5)=r w_5,\nonumber\\	
	\alpha(x_v)\hspace{-2mm}&=&\hspace{-2mm}\sum_{s\in V(\G)}a_{(v,s)}x_s,\nonumber\\
	\alpha(w_6)\hspace{-2mm}&=&\hspace{-2mm}\mu w_6+F,\nonumber\\
\alpha(z_{(v,u)})	\hspace{-2mm}&=&\hspace{-2mm}\sum_{(r,s)\in E(\G)}\rho_{(v,u),(r,s)}z_{(r,s)}+B_{(v,u)},	
\end{eqnarray}
where all  the coefficients belong to $\Z_{(p)}$ and where $B_{(v,u)}$ and $F$ are decomposable elements in $\mathcal{L}_{2337}(\G,0)$ and $\mathcal{L}_{2313}(\G,0)$ respectively. 
\begin{remark}	
	\label{r0}
 Almost every coefficients $\rho_{(v,u),(r,s)}$ and $a_{(v,s)}$ is zero. Moreover,  as $\alpha$ is a  homotopy equivalence, then its induces an isomorphism on the indecomposables, therefore   $\beta,\lambda,q,\gamma,r\neq 0$ and at least one of the coefficients $a_{(v,s)}$ is not zero as well as,   at least one of the coefficients $\rho_{(v,u),(r,s)}$ is not zero.
\end{remark} 
\begin{lemma}	
	\label{p1}
	If $[\alpha]\in \E(\mathcal{L}(G,0))$, then 
$q=\lambda^2$ and $r=\gamma^2$.
\end{lemma}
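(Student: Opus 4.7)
The approach is a direct application of the chain-map condition $\alpha\circ\partial=\partial\circ\alpha$ to the two generators whose differentials are nontrivial, namely $w_4$ and $w_5$. Since the form of $\alpha$ on $w_1,\dots,w_5$ was already pinned down (up to scalars) by degree considerations, the two relations $q=\lambda^2$ and $r=\gamma^2$ should drop out just by equating coefficients in the free Lie algebra.

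First, I would compute both sides of $\alpha(\partial(w_4))=\partial(\alpha(w_4))$. The left-hand side is
$$
\alpha([w_2,w_2])=[\alpha(w_2),\alpha(w_2)]=[\lambda w_2,\lambda w_2]=\lambda^{2}[w_2,w_2],
$$
while the right-hand side equals $\partial(qw_4)=q[w_2,w_2]$. Then I would invoke the fact that $[w_2,w_2]$ is a nonzero element of $\tilde{\mathcal L}$: since $|w_2|=151$ is odd, in the universal enveloping algebra $\tilde{\mathcal T}$ one has $[w_2,w_2]=2w_2^{2}$ by (\ref{8}), and the assumption $p>1114$ (in particular $p\neq 2$) guarantees that this element is nonzero in $\Z_{(p)}$. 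Hence we may cancel $[w_2,w_2]$ and obtain $\lambda^{2}=q$.

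The case of $w_5$ is entirely analogous: applying the chain-map condition to $w_5$ yields
$$
\gamma^{2}[w_3,w_3]=[\gamma w_3,\gamma w_3]=\alpha([w_3,w_3])=\alpha(\partial(w_5))=\partial(rw_5)=r[w_3,w_3],
$$
and again $[w_3,w_3]=2w_3^{2}\neq 0$ in $\tilde{\mathcal T}$ (since $|w_3|=201$ is odd and $p\neq 2$), so we may cancel to conclude $\gamma^{2}=r$.

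There is really no serious obstacle here: the only point that requires a moment's care is the non-triviality of $[w_2,w_2]$ and $[w_3,w_3]$ in the free graded Lie $\Z_{(p)}$-algebra, and this is immediate from the oddness of the degrees together with $p\neq 2$. Note also that the argument is insensitive to the choice of homotopy representative of $[\alpha]$, because the identity $\alpha\circ\partial=\partial\circ\alpha$ holds on the nose for any dgl-map, and the scalars $\lambda,\gamma,q,r$ are read off from the action of $\alpha$ on the indecomposables, which is a homotopy invariant.
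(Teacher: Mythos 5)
Your proof is correct and follows essentially the same route as the paper: apply $\alpha\circ\partial=\partial\circ\alpha$ to $w_4$ and $w_5$ and compare coefficients of $[w_2,w_2]$ and $[w_3,w_3]$. The extra justification you give for the nonvanishing of these brackets (odd degrees, $p\neq 2$) is a small but welcome addition that the paper leaves implicit.
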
 
\begin{proof}	
Since $\partial(\alpha(w_5))=\alpha(\partial(w_5))$ and $\partial(\alpha(w_3))=\alpha(\partial(w_3))$, it follows that 
	$$\partial(\alpha(w_5))=r[w_3,w_3],\,\,\,\,\,\,\,\,\,\alpha(\partial(w_5))=[\alpha(w_3),\alpha(w_3)]=\gamma^2[w_3,w_3],$$
	$$\partial(\alpha(w_3))=q[w_2,w_2],\,\,\,\,\,\,\,\,\,\alpha(\partial(w_3))=[\alpha(w_2),\alpha(w_2)]=\lambda^2[w_2,w_2].$$
	Consequently, $q=\lambda^2$ and $r=\gamma^2.$
\end{proof} 
\begin{proposition}		\label{p4}
	Let $[\alpha])\in \mathcal{L}(G,0)$.  There exists unique  $\phi\in\aut(\G)$ such that 
	\begin{eqnarray}
		\label{182}
	\alpha(z_{(v,u)})\hspace{-2mm}&=&\hspace{-2mm}z_{(\phi(v),\phi(u))}+B_{(v,u)},\,\,\,\,\,\,\,\,\,\,\,\,\forall (v,u)\in E(\G),\\
	\alpha(w_6)\hspace{-2mm}&=&\hspace{-2mm} w_6+F,\nonumber\\	
		\alpha_{}(x_v)\hspace{-2mm}&=&\hspace{-2mm}x_{\phi(v)},\,\,\,\,\,\,\,\,\,\,\,\,\,\,\,\,\,\,\,\,\,\,\,\,\,\,\,\,\,\,\,\,\,\,\,\,\,\,\,\,\,\,\,\,\,\,\,\,\,\forall v\in V(\G),\nonumber\\
		\alpha_{}(w_i)	\hspace{-2mm}&=&\hspace{-2mm}w_i,\,\,\,\,\,\,\,\,\,\,\,\,\,\,\,\,\,\,\,\,\,\,\,\,\,\,\,\,\,\,\,\,\,\,\,\,\,\,\,\,\,\,\,\,\,\,\,\,\,\,\,\,\,\,\,\,i=1,2,3,4,5.\nonumber	
	\end{eqnarray}
Moreover,  $B_{(v,u)}$ and $F$ are cycles in $\mathcal{L}_{2337}(\G,0)$ and $\mathcal{L}_{2313}(\G,0)$ respectively. 
\end{proposition}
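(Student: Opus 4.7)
The strategy is to unpack the cocycle condition $\partial\alpha(z_{(v,u)}) = \alpha\partial(z_{(v,u)})$ and match coefficients of each linearly independent indecomposable bracket that appears on either side, using \lemref{l5}, \lemref{l1} and \lemref{l9} to ensure these brackets are not boundaries in $\tilde{\mathcal{L}}$. Since $B_{(v,u)}$ is decomposable of degree $2337$, $\partial B_{(v,u)}$ can contribute only to the decomposable part of the output; so every indecomposable contribution on the two sides must balance independently.

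First I would examine the $(\operatorname{ad} x_v)^{3}([w_1,w_2])$ component. Under $\alpha$ it expands in $\tilde{\mathcal{L}}$ as $\beta\lambda\sum_{s_1,s_2,s_3} a_{(v,s_1)}a_{(v,s_2)}a_{(v,s_3)}[x_{s_1},[x_{s_2},[x_{s_3},[w_1,w_2]]]]$, while the right-hand side yields only pure cubes $(\operatorname{ad} x_r)^{3}([w_1,w_2])$ weighted by $\rho_{(v,u),(r,s)}$. \lemref{l1} provides three linearly independent mixed brackets (not boundaries) whose coefficients on the left must therefore vanish; since $\Z_{(p)}$ is an integral domain and $\beta\lambda\neq 0$ by \remref{r0}, this forces $a_{(v,s)}\,a_{(v,s')}^{2}=0$ whenever $s\neq s'$, so for each $v$ exactly one $a_{(v,\phi(v))}$ is nonzero. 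Next, comparing the $(\operatorname{ad} w_1)^{7}([w_2,[x_v,x_u]])$ component via \lemref{l5} gives $\rho_{(v,u),(r,s)}=0$ except at $(r,s)=(\phi(v),\phi(u))$; in particular $(\phi(v),\phi(u))\in E(\G)$, so $\phi$ preserves edges. Invertibility of $\alpha$ on indecomposables forces the matrix $(a_{(v,s)})$ to be a monomial permutation matrix, so $\phi$ is a bijection of $V(\G)$, and repeating the argument for a homotopy inverse of $\alpha$ shows $\phi^{-1}$ also preserves edges; hence $\phi\in\aut(\G)$ and is uniquely determined by $\alpha$.

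The remaining indecomposable terms $(\operatorname{ad} w_1)^{19}(w_2)$, $X_v$, $Y$ and $Z$, via \lemref{l5} and \lemref{l9}, yield algebraic identities between the scalars: for instance $\rho_{(v,u),(\phi(v),\phi(u))}=\beta^{19}\lambda$, $a_{(v,\phi(v))}^{3}\beta\lambda=\rho_{(v,u),(\phi(v),\phi(u))}$, and corresponding relations among $\beta,\gamma,\lambda$ extracted from the $X_v$, $Y$ and $Z$ coefficients. Combined with \propref{p1} these relations constrain all of $\beta$, $\lambda$, $\gamma$, $a_{(v,\phi(v))}$ and $\rho_{(v,u),(\phi(v),\phi(u))}$ to be compatible units in $\Z_{(p)}$. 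To pass to the normalised representative displayed in \eqref{182}, I would then invoke \lemref{l0} to modify $\alpha$ within its homotopy class by successively subtracting boundaries from its images on $w_i$, $x_v$ and $z_{(v,u)}$, exploiting the freedom provided by the scalar identities to set all the relevant coefficients to $1$. This normalisation is where I expect the main obstacle to lie: rescaling an indecomposable generator is not itself a homotopy, so the proof must leverage the precise exponents appearing in the relations above together with \lemref{l0} to reduce the scalars to unit values without leaving the homotopy class.

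Once $\alpha$ has been placed in the normalised form \eqref{182}, $\alpha\partial(z_{(v,u)})=\partial(z_{(\phi(v),\phi(u))})$ holds termwise on all indecomposable contributions; comparing with $\partial\alpha(z_{(v,u)})=\partial(z_{(\phi(v),\phi(u))})+\partial B_{(v,u)}$ gives $\partial B_{(v,u)}=0$, so $B_{(v,u)}$ is a cycle in $\mathcal{L}_{2337}(\G,1)$, completing the proof.
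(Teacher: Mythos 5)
Your opening moves match the paper's: you expand $\partial\alpha(z_{(v,u)})=\alpha\partial(z_{(v,u)})$ in the universal enveloping algebra, use Lemma~\ref{l1} to kill the mixed brackets $[x_s,[x_s,[x_{s'},[w_1,w_2]]]]$ and force exactly one nonzero $a_{(v,s)}$ per vertex, use Lemmas~\ref{l5} and~\ref{l9} to localise $\rho_{(v,u),(r,s)}$ at a single edge, and extract the scalar identities. But there is a genuine gap at the decisive step. The proposition asserts the \emph{on-the-nose} equalities $\alpha(w_i)=w_i$, $\alpha(x_v)=x_{\phi(v)}$ and coefficient $1$ on $z_{(\phi(v),\phi(u))}$, and you never establish them: you write down two of the six identities in the chain
$\rho_{(v,u),(t_v,t_u)}=\beta\lambda a^3_{(v,t_v)}=\beta^{7}\lambda a_{(v,t_v)}a_{(u,t_u)}=\beta\gamma^4 a^2_{(v,t_v)}=\beta^{19}\lambda=\beta^{12}\lambda\gamma^4=\beta^{5}\lambda^5\gamma^5$,
say only that they "constrain" the scalars to be "compatible units", and then defer to a normalisation by Lemma~\ref{l0}. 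That deferral fails, for the reason you yourself flag: Lemma~\ref{l0} only lets you perturb $\alpha$ by a boundary, and $(\beta-1)w_1$, $(\lambda-1)w_2$, etc.\ are never boundaries (nothing in the dgl hits the low-degree generators $w_i$ or the $x_v$), so no homotopy can rescale an indecomposable generator. The obstacle you identify is real and your proposed tool cannot overcome it.

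The paper's resolution is purely arithmetic and requires no normalisation at all: the chain of identities above is \emph{overdetermined} by the deliberately chosen exponents $3,7,19,12,5,\dots$ in the differential $\partial(z_{(v,u)})$. Eliminating variables gives $\beta^{7}=\gamma^{4}$, $\gamma^{3}=\lambda^{4}$ and eventually $\gamma^{8}=1$; since the only roots of unity in $\Z_{(p)}\subset\Q$ are $\pm1$ and the odd exponents rule out $-1$, one gets $\beta=\gamma=\lambda=a_{(v,t_v)}=\rho_{(v,u),(t_v,t_u)}=1$ exactly, and $q=r=1$ by Lemma~\ref{p1}. Only after this does $\partial B_{(v,u)}=0$ follow from comparing the two expansions, as you correctly state at the end. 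So your proof is missing its central computation: without solving the unit equations, the statement as written (equality, not homotopy) is not reached, and the route you propose in its place does not close the gap.
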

\begin{proof}	
	Notice    that the strong connectivity of the graph implies that for every $v \in V (\G)$, $v$
	is the starting vertex of an edge $(v, w) \in E(\G)$. Therefore the coefficients in (\ref{11}) can be entirely determined by the relation $\alpha\circ\partial=\partial\circ\alpha$. 
	Indeed, first we have 
\begin{eqnarray}
	\label{x13}
	\alpha(\partial(w_6))\hspace{-3mm}&=&\hspace{-3mm}\big[[\alpha(w_2)\alpha(,w_3)],(\operatorname{ad} \alpha(w_3))^{9}(\alpha(w_2))\big]+(\operatorname{ad} \alpha(w_1))^{2}\Big((\operatorname{ad} [\alpha(w_4),\alpha(w_2)])^{4}([\alpha(w_1),\alpha(w_2)])\Big)\nonumber\\
	\hspace{-3mm}&=&\hspace{-3mm}\lambda^2\gamma^{10}\big[[w_2,w_3],(\operatorname{ad} w_3)^{9}(w_2)\big]+\beta^{3}\lambda^{13}(\operatorname{ad} w_1)^{2}\Big((\operatorname{ad} [w_4,w_2])^{4}([w_1,w_2])\Big)\nonumber\\
	\partial( \alpha(w_6))	)\hspace{-3mm}&=&\hspace{-3mm}\mu \partial(w_6)+ \partial(F)=\mu\big[[w_2,w_3],(\operatorname{ad} w_3)^{9}(w_2)\big]+\mu(\operatorname{ad} w_1)^{2}\Big((\operatorname{ad} [w_4,w_2])^{4}([w_1,w_2])\Big)+ \partial(F)\nonumber.		
\end{eqnarray}
Since $\alpha(\partial(w_6))=\partial( \alpha(w_6))	)$ and $F$ is decomposable element  in $\mathcal{L}_{2337}(\G,0)$, we deduce that 
\begin{equation}
	\label{x3}
	\mu=\lambda^2\gamma^{10}=\beta^{3}\lambda^{13},\,\,\,\,\,\,\,\,\,\,\,\,\,\,\partial(F)=0
\end{equation}
Next, we have
	\begin{eqnarray}
		\label{13}
		\alpha(\partial(z_{(v,u)})))\hspace{-2mm}&=&\hspace{-2mm}\big(\operatorname{ad} 	(\alpha(x_v))\big)^3([	\alpha(w_1),	\alpha(w_2)])+\big(\operatorname{ad} (	\alpha(w_1))\big)^7([	\alpha(w_2),[	\alpha(x_v),	\alpha(x_u)]])+\nonumber\\
		&&+\big(\operatorname{ad} (\alpha(w_1))\big)^{19}(\alpha(w_2))	+	\alpha(Y)+	\alpha(Z),\nonumber\\
		\partial( \alpha(z_{(v,u)}))	)\hspace{-2mm}&=&\hspace{-2mm}\sum_{(r,s)\in E(\G)}\rho_{(v,u),(r,s)}\partial(z_{(r,s)})+\partial(B_{(v,u)}),	\\	
		\hspace{-2mm}&=&\sum_{(r,s)\in E(\G)}\rho_{(v,u),(r,s)}\Big((\operatorname{ad} x_r)^3([w_1,w_2])+(\operatorname{ad} w_1)^7([w_2,[x_r,x_s]])+\nonumber\\
		&&(\operatorname{ad} w_1)^{19}(w_2)+Y+Z\Big)+\partial(B_{(v,u)})\nonumber.		
	\end{eqnarray}
	where $Y,Z$ are given in (\ref{12}). 
	%Recall that by (\ref{17}) we have
	%$$\alpha\circ\partial(z_{(v,u)})-\partial\circ\alpha(z_{(v,u)})=\partial(\phi_{(v,u)})\,\,\,\,\,\,\,,\,\,\,\,\,\,\phi_%{(v,u)}\in\mathcal {L}_{}(\G_{\leq 114})$$
	Next, recall that from  the relations (\ref{11}), we have
	\begin{equation}\label{23}
	\alpha(w_1)=\beta w_1,\,\,\,\,\,\,\,\,\,\,\alpha(w_2)=\lambda w_2,\,\,\,\,\,\,\,\,\,	\alpha(x_v)=\sum_{s\in V(\G)}a_{(v,s)}x_s,
	\end{equation}
	where almost every coefficients  $a_{(v,s)}$ is zero  and at least one of them  is not zero. Expanding the expression  
	\begin{equation*}\label{14}
		(\operatorname{ad} 	\alpha(x_v))^3([\alpha(w_1),	\alpha(w_2)]),
	\end{equation*}
 using (\ref{23}),  we get to the following brackets  
	\begin{equation}\label{244}
		\beta\lambda a^2_{(v,s)}a_{(v,s')}[x_s,[x_s,[x_{s'},[w_2,w_1]]]],\,\,\,\,\,\,\,\,\,\,\,\, v,s,s'\in V(\G),
	\end{equation}
	$$\beta\lambda a^2_{(v,s)}a_{(v,s')}[x_s,[x_{s'},[x_{s},[w_2,w_1]]]],\,\,\,\,\,\,\,\,\,\,\,\, v,s,s'\in V(\G),$$
	$$\beta\lambda a^2_{(v,s)}a_{(v,s')}[x_{s'},[x_s,[x_{s},[w_2,w_1]]]],\,\,\,\,\,\,\,\,\,\,\,\, v,s,s'\in V(\G).$$
 	These brackets are proven to be linearly independent according to Lemma 3.3.
 	
 	\noindent However, none of the brackets in the expression (\ref{13}), giving  $\partial( \alpha(z_{(v,u)}))$, is formed using  three generators $x_s,x_s,x_{s'}$ with $s\neq s'$. Moreover, by Lemma \ref{l1},  the expressions (\ref{244}) as well as the following expression 
	$$	\beta\lambda a^2_{(v,s)}a_{(v,s')}\Big([x_s,[x_s,[x_{s'},[w_1,w_2]]]]+[x_s,[x_{s'},[x_{s},[w_1,w_2]]]]+[x_{s'},[x_s,[x_{s},[w_1,w_2]]]]\Big),$$
	are neither trivial nor a boundaries.  
	
	\noindent Now, by the formula $\alpha(\partial(z_{(v,u)})))=\partial( \alpha(z_{(v,u)})))$, we deduce that  all of the coefficients 	$\beta\lambda a^2_{(v,s)}a_{(v,s')}$  are  nil.  Since $\beta,\lambda\neq 0$, it follows that  only one  coefficient among $a_{(v,s)}$, where $s\in V(\G)$,    is not zero. Let us denote it by $a_{(v,t_v)}$. As a result, the formula (\ref{23}) becomes $\alpha(x_v)=a_{(v,t_v)}x_t.$ Thus, there is  a  unique  vertex $t\in V(\G)$ such that  $\alpha(x_v)=a_{(v,t_v)}x_t$.

	Consequently, on  the one hand and going back to (\ref{12}) and (\ref{11}),  we deduce that
	$$\alpha(Y)=\beta^{12}\lambda\gamma^4Y,\,\,\,\,\,\,\,\,\,\,\,\,\,\,\,\alpha(Z)=\beta^{5}\lambda^5\gamma^5Z.$$	
	On the other hand,  the formulas  (\ref{13}) become
	\begin{eqnarray}
		\label{133}
		\alpha(\partial(z_{(v,u)}))\hspace{-2mm}&=&\hspace{-2mm}\beta\lambda a^3_{(v,t_v)}\big(\operatorname{ad}(x_t))^3([w_1,w_2])+\beta^{7}\lambda a_{(v,t_v)}a_{(u,t_u)}\big(\operatorname{ad} (w_1)\big)^7([w_2,[x_t,x_{t'}]])\nonumber\\
		&&+\beta^{19}\lambda\big(\operatorname{ad} (w_1))^{19}(w_2)	+\beta^{12}\lambda\gamma^4Y	+	\beta^{5}\lambda^5\gamma^5Z,\nonumber\\
		\partial( \alpha(z_{(v,u)}))	\hspace{-2mm}&=&\hspace{-2mm}\sum_{(r,s)\in E(\G)}\rho_{(v,u),(r,s)}\Big((\operatorname{ad} x_r)^3([w_1,w_2])+(\operatorname{ad} w_1)^7([w_2,[x_r,x_s]])+\nonumber\\
		&&(\operatorname{ad} w_1)^{19}(w_2)+Y+Z\Big)+\partial(B_{(v,u)}).	
	\end{eqnarray}
	Likewise, due to Lemmas \ref{l5} and \ref{l9},  all the brackets in $\partial( \alpha(z_{(v,u)}))-\partial(B_{(v,u)})$ and $\alpha(\partial(z_{(v,u)}))$
	are not boundaries, and by  comparing the coefficients in the formula
	\begin{equation}\label{b12}
		\partial( \alpha(z_{(v,u)}))-\alpha(\partial(z_{(v,u)}))=0,
	\end{equation}
	we deduce that all the coefficients $\rho_{(v,u),(r,s)}=$ are zero except   $\rho_{(v,u),(t_v,t_u)}\neq 0$ which satisfies the following equations
	\begin{equation}\label{333}
		\rho_{(v,u),(t_v,t_u)}=\beta\lambda a^3_{(v,t_v)}=\beta^{7}\lambda a_{(v,t_v)}a_{(u,t_u)}=\beta^{19}\lambda=\beta^{12}\lambda\gamma^4=\beta^{5}\lambda^5\gamma^5.
	\end{equation}
	From $\beta^{19}\lambda=\beta^{12}\lambda\gamma^4=\beta^{5}\lambda^5\gamma^5$, we deduce that
	$$\beta^{7}=\gamma^4,\,\,\,\,\,\,\beta^{12}=\lambda^4\gamma^5,\,\,\,\,\,\beta^{7}=\lambda^4\gamma\Longrightarrow \gamma^{3}=\lambda^4,$$
	therefore,
	$$(\beta^{7})^{12}=\gamma^{48}=(\beta^{12})^7=\lambda^{28}\gamma^{35}\Longrightarrow \gamma^{13}=\lambda^{28}=(\gamma^{3})^7=\gamma^{21}.$$
	It follows that $\gamma^{8}=1$. As $\gamma^{3}=\lambda^4$ and $\beta^{7}=\gamma^4$, we deduce that $\beta=\gamma=1$.  As a result,  the relation (\ref{x3}) becomes   $\lambda^2=\lambda^{13}$ implying that $\lambda=1$. 
	
	\noindent Now, the relations (\ref{333}) become
	$$\rho_{(v,u),(t_v,t_u)}= a^3_{(v,t_v)}=a_{(v,t_v)}a_{(u,t_u)}= 1.$$
	Consequently,  we get 
	$$\rho_{(v,u),(t_v,t_u)}=\beta=\lambda= a_{(v,t_v)}=a_{(u,t_u)}=\gamma=1,$$
	and from  Lemma \ref{p1}, it follows that $q=r=1$. Thus, the formulas (\ref{133}) become
	\begin{eqnarray}
		\label{1333}
		\alpha(\partial(z_{(v,u)}))\hspace{-2mm}&=&\hspace{-2mm}\big(\operatorname{ad}(x_t))^3([w_1,w_2])+\big(\operatorname{ad} (w_1)\big)^7([w_2,[x_t,x_{t'}]])+\big(\operatorname{ad} (w_1))^{19}(w_2)	+\nonumber\\&&Y	+	Z,\nonumber\\
		\partial( \alpha(z_{(v,u)}))	\hspace{-2mm}&=&\hspace{-2mm}\operatorname{ad}(x_t))^3([w_1,w_2])+\big(\operatorname{ad} (w_1)\big)^7([w_2,[x_t,x_{t'}]])+\big(\operatorname{ad} (w_1))^{19}(w_2)	+\nonumber\\
		&&Y	+	Z+(\operatorname{ad} w_1)^{19}(w_2)+Y+Z+\partial(B_{(v,u)})\nonumber.	
	\end{eqnarray} 
and from (\ref{b12})	, it follows that $\partial(B_{(v,u)})=0$.
	
	Thus,  going back the formulas  (\ref{11}), we have  proved that for  every $v\in V(\G)$,  there is  a  unique  vertex $t_v\in V(\G)$ and  for  every $(v,u)\in E(\G)$,  there is  a  unique  edge $(t_v,t_u)\in E(\G)$ such that
	\begin{equation*}\label{36} \alpha(z_{(v,u)})=z_{(t_v,t_u)}+B_{(v,u)},\,\,\,\,\alpha(x_v)=x_{t_v},\,\,\,\,\alpha(w_6)= w_6+F,\,\,\,\,\alpha(w_i)=w_i,\,\,\,\,1,2,3,4,5.
	\end{equation*}
with  both $B_{(v,u)}$ and $F$ being cycle as desired.

 Thus,  define $\phi:\G\to \G$, by $\phi(v)=t_v$, $\phi((v,u))=(t_v,t_u)$, we obtain (\ref{182}).
	%in consideration proposition \ref{p4}, we have proved that  if $(\alpha,[\alpha])\in C_{}^{387}$, then for every $v\in V(\G)$  there exist  a  unique   $t\in V(\G)$ and for any  $(v,u)\in E(\G)$ there exists a unique  $(t,t')\in E(\G)$ such that
\end{proof}
\begin{remark}
	\label{m0}
	Let 	$\mathcal {L}_{}(\G,27)=\mathcal {L}_{}(\G,0)\oplus\Big(\L(h, h\in \I),\partial\Big)$ be the DGL obtained from $\mathcal {L}_{}(\G,0)$ by adding generators $ h\in \I$, where $2314\leq\vert h\vert\leq 2340$.
	It is easy to see that $\mathcal {L}_{}(\G,27)$ does not contain any bracket of degree $d$,  where $2313\leq d\leq 2340$, formed by  using  three  generators from  the set  $\{x_v\}_{v\in V(\G)}$. Indeed;   if a bracket  contains three generators from    $\{x_v\}_{v\in V(\G)}$, it follows that the sum of the degrees of  the other generators forming   this bracket is $d-3\times 690$.  But we have $243\leq d-3\times 690\leq 270$ and we know that no element in $\mathcal{L} _{}(\G,27)$ has degree between 243 and  270. Consequently, for  $0\leq n\leq 27$,  if the sub-module  $Z_{n}(\mathcal{L}(\G,27))$ of the cycles of degree $2313+n$ in $\mathcal {L}_{}(\G,27)$ is not trivial,  then  we can choose a Hall basis for  $Z_{n}(\mathcal{L}(\G,27))$ the following  set  
	\begin{equation}\label{z9}
	\B_{n}=\Big\{y_{n,1},\dots,y_{n,m_{n}},y_{s,v}, y_{n,\{v',u\}}; v,v',u\in V(\G)\Big\},
	\end{equation}
where
	\begin{enumerate}
		\item $y_{n,1},\dots,y_{n,m_{n}}\in\L(w_1,w_2,w_3,w_4,w_5)$.
		\item $y_{n,v}$ is formed by  using  a generator from  the set $\{x_v\}_{v\in V(\G)}$ and $w_1,w_2,w_3,w_4,w_5$.
		\item $y_{n,\{v',u\}}$ is formed by  using  two generators from   $\{x_v\}_{v\in V(\G)}$ and $w_1,w_2,w_3,w_4,w_5$.
	\end{enumerate}
Furthermore,  based on   Proposition \ref{p4} and  taking into account that  $y_{s,v}$ and  $y_{s,\{v',u\}}$  contain   elements from  $\{x_v\}_{v\in V(\G)}$,	if $[\alpha]\in \mathcal{E}(\mathcal {L}(\G,27))$, then  there is a unique  $\phi\in\aut(\G)$ such that for every $v,v',u\in V(\G)$ and $0\leq n\leq 27$, we have
$$\alpha(y_{n,v})=y_{n,\phi(v)},\,\,\,\,\,\,\,\,\,\,\,\,\,\,\,\,\,\,\,\,\,\,
\alpha(y_{n,\{v',u\}})=y_{n,\{\phi(v'),\phi(u)\}}$$
\end{remark}
\begin{lemma}	
	\label{xb9} The sub-module $Z_{2313}(\mathcal{L}(\G,0))$ of the cycles of degree $2313$ is not trivial. 
\end{lemma}	
\begin{proof}
First, Indeed, for instance,	it is easy to check that the following bracket
$$[(\operatorname{ad} w_{3})^{6}(w_2),(\operatorname{ad} w_{2})^{5}(w_3)]$$
is a  cycle of degree $2313$. 
\end{proof}
\begin{remark}
	\label{xc2}
		Due to Proposition \ref{p4}, we know that  $F$, given in (\ref{182}), is a cycle.  Hence, by Remark \rm{\ref{m0}}, we can write  $F$ as a linear combination of the elements of $\B_{0}$.
\end{remark}
The forthcoming lemma, presented in a broad context, is poised to play a pivotal role in the forthcoming developments.
\begin{lemma}
	\label{xz1}
	Let $\G$ be a graph and $\phi\in\aut(\G)$. Assume  $(\mathbb {L}_{}(W),\partial)$ is a DGL such that  the sub-module $Z_{p}(\L(W))$ has for a Hall basis the set $\{y_{1},\dots,y_{m},y_v,y_{\{v',u\}}\}_{v,v',u\in V(\G)}$. 
	Let  $\mathbb {L}_{}(U)=\mathbb {L}_{}(W)\oplus\Big(\L(t_{1},\dots,t_{m},t_v,t_{\{v',u\}}\mid v,v',u\in V(\G), ;\partial\Big)$ 
	be the DGL obtained from  $\mathbb {L}_{}(W)$ by adding generators $t_{1},\dots,t_{m},t_v,t_{(v',u)}$ in degree $p+1$.  The differential is given by 
	\begin{equation}\label{xa11}
		\partial(t_{1})=y_{1},\dots,\partial(t_{m})=y_{m},\,\,\,\,\,\,\,\,
		\partial(t_{v})=y_{v},\,\,\,\,\,\,\,\,\,\,\,\,
		\partial(t_{\{v',u\}})=y_{\{v',u\}}.
	\end{equation}
	If $[\alpha]\in \mathcal{E}(\mathbb {L}(U))$ is such that
	\begin{equation}\label{xa10}
		\alpha(y_i)=y_i\,\,\,\,\,\,\,\,\alpha(y_v)=y_\phi(v),\,\,\,\,\,\,\,\,\alpha(y_{\{v',u\}})=y_{\{\phi(v'),\phi(u)\}},
	\end{equation}	
	then  we have
	\begin{eqnarray}
		\label{j0}
		\alpha_{}(t_{\{v',u\}})\hspace{-2mm}&=&\hspace{-2mm}t_{\{\phi(v'),\phi(u)\}}+C_{\{v',u\}},\,\,\,\,\,\,\,\,\,\,\,\,\,\,\,\,\,\,\,\forall v',u\in V(\G)\nonumber\\
		\alpha_{}(t_{v})\hspace{-2mm}&=&\hspace{-2mm}t_{\phi(v)}+C_{v},\,\,\,\,\,\,\,\,\,\,\,\,\,\,\,\,\,\,\,\,\,\,\,\,\,\,\,\,\,\,\,\,\,\,\,\,\,\,\,\,\,\,\,\,\,\,\,\,\forall v\in V(\G)\nonumber\\
		\alpha_{}(t_{k})\hspace{-2mm}&=&\hspace{-2mm}t_{k}+C_{k},\,\,\,\,\,\,\,\,\,\,\,\,\,\,\,\,\,\,\,\,\,\,\,\,\,\,\,\,\,\,\,\,\,\,\,\,\,\,\,\,\,\,\,\,\,\,\,\,\,\,\,\,\,\,\,\forall k=1,\dots,m.\nonumber
	\end{eqnarray}
	where $C_{\{v',u\}}$, $C_{v}$ and $C_{k}$ are cycles of degree $p+1$.
\end{lemma}
\begin{proof}
	First, for degree reasons, we can write
	\begin{eqnarray}
		\label{xa12}
		\alpha_{}(t_{\{v',u\}})\hspace{-2mm}&=&\hspace{-2mm}\sum_{i=1}^{m}\sigma_{i}t_{i}+\sum_{r\in V(\G)}\sigma_{\{v',u\},r}t_r+\sum_{s,t\in V(\G)}\sigma_{\{v',u\},\{s,t\}}t_{\{s,t\}}+C_{\{v',u\}},\nonumber\\
		\alpha_{}(t_{v})\hspace{-2mm}&=&\hspace{-2mm}\sum_{i=1}^{m}\tau_{i}t_{i}+\sum_{r\in V(\G)}\tau_{v,r}t_r+\sum_{s,t\in V(\G)}\tau_{v,\{s,t\}}t_{\{s,t\}}+C_{v},\nonumber\\
		\alpha_{}(t_{k})\hspace{-2mm}&=&\hspace{-2mm}\sum_{i=1}^{m}\nu_{i}t_{i}+\sum_{r\in V(\G)}\nu_{k,r}t_r+\sum_{s,t\in V(\G)}\nu_{k,\{s,t\}}t_{\{s,t\}}+C_{k}.
	\end{eqnarray}
	where 	 all the coefficients  belong to $\Z_{(p)}$ and where  $C_{\{v',u\}},C_{v}$ and $C_{k}$ are   decomposable elements in $\mathbb{L}_{p+1}(W)$. Therefore,  using  (\ref{xa11}) we get
	\begin{eqnarray}
		\label{xa13}
		\partial(\alpha_{}(t_{\{v',u\}}))\hspace{-2mm}&=&\hspace{-2mm}\sum_{i=1}^{m}\sigma_{i}\partial(t_{i})+\sum_{r\in V(\G)}\sigma_{\{v',u\},r}\partial(t_{r})+\sum_{s,t\in V(\G)}\sigma_{\{v',u\},\{s,t\}}\partial(t_{\{s,t\}})+\partial(C_{\{v',u\}}),\nonumber\\
			\partial(\alpha_{}(t_{v}))\hspace{-2mm}&=&\hspace{-2mm}\sum_{i=1}^{m}\tau_{i}\partial(t_{i})+\sum_{r\in V(\G)}\tau_{v,r}\partial(t_r)+\sum_{s,t\in V(\G)}\tau_{v,\{s,t\}}t_{\{s,t\}}+\partial(C_{v}),\nonumber\\
		\partial(	\alpha_{}(t_{k}))\hspace{-2mm}&=&\hspace{-2mm}\sum_{i=1}^{m}\nu_{i}\partial(t_{i})+\sum_{r\in V(\G)}\nu_{k,r}\partial(t_r)+\sum_{s,t\in V(\G)}\nu_{k,\{s,t\}}t_{\{s,t\}}+\partial(C_{k}).
	\end{eqnarray}
	Next,  using  \ref{xa10} we get
	\begin{equation*}
		\label{a5}
		\alpha( \partial(t_{\{v',u\}}))=	\alpha(y_{\{v',u\}})=y_{\{\phi(v'),\phi(u)\}},\,\,\,\,\, \alpha( \partial(t_{v}))=	\alpha(y_{v})=y_{\phi(v)},\,\,\,\,\,\alpha( \partial(t_{k}))=\alpha(y_{k})=y_{k}
	\end{equation*}
	As $\partial\circ \alpha=	\alpha\circ \partial$,  it follows that $\partial(C_{\{v',u\}})=\partial(C_{v})=\partial(C_{k})=0$ and all the coefficients in the relations  (\ref{xa13}) are zero except  $\sigma_{\{v',u\},\{\phi(v'),\phi(u)\}}=\tau_{v,v}=\nu_{k}=1$. 
\end{proof}
\subsection{Construction of the DGL $\mathcal {L}_{}(\G,1)$}
We extend 	$\mathcal {L}_{}(\G,0)$ by adding generators to  obtain  the following DGL
\begin{equation*}
	\mathcal {L}_{}(\G,1)=\mathcal {L}_{}(\G,0)\oplus\Big(\L(t_{1,1},\dots,t_{1,m_0},t_{1,v},t_{1,\{v',u\}};\mid v,v',u\in V(\G),\partial\Big).
\end{equation*}
The degrees of the generators  are as follows 
$$\vert t_{1,1}\vert=\dots=\vert t_{1,m_1}\vert=\vert t_{1,v}\vert=\vert t_{1,\{v',u\}}\vert =2314,\,\,\,\,\, \forall v,v',u\in V(\G).$$
The differential is given by
\begin{eqnarray}
	\label{xa9}
	\partial(t_{1,1})=y_{0,1},\dots,\partial(t_{1,m_1})=y_{0,m_0},\,\,\,\,\,\,\,\,
	\partial(t_{1,v})=y_{0,v},\,\,\,\,\,\,\,\,
	\partial(t_{1,\{v',u\}})=y_{0,\{v',u\}}.
\end{eqnarray}	
where $\B_0=\{y_{0,1},\dots,y_{0,m_0},y_{0,v},y_{0,\{v',u\}}; v,v',u\in V(\G)\}$ as  in  (\ref{z9}). 
\begin{lemma}
	\label{xll5}
	If $[\alpha]\in \mathcal{E}(\mathcal {L}(\G,1))$, then  there exists a unique  $\phi\in\aut(\G)$ such that
	\begin{eqnarray}
		\label{xya2}
		\alpha_{}(t_{1,\{v',u\}})\hspace{-2mm}&=&\hspace{-2mm}t_{1,\{\phi(v'),\phi(u)\}}+C_{1,\{v',u\}},\,\,\,\,\,\,\,\,\,\,\,\,\,\,\,\,\,\,\,\forall v',u\in V(\G)\nonumber\\
		\alpha_{}(t_{1,v})\hspace{-2mm}&=&\hspace{-2mm}t_{1,\phi(v)}+C_{1,v},\,\,\,\,\,\,\,\,\,\,\,\,\,\,\,\,\,\,\,\,\,\,\,\,\,\,\,\,\,\,\,\,\,\,\,\,\,\,\,\,\,\,\,\,\,\,\,\,\forall v\in V(\G)\nonumber\\
		\alpha_{}(t_{1,k})\hspace{-2mm}&=&\hspace{-2mm}t_{1,k}+C_{1,k},\,\,\,\,\,\,\,\,\,\,\,\,\,\,\,\,\,\,\,\,\,\,\,\,\,\,\,\,\,\,\,\,\,\,\,\,\,\,\,\,\,\,\,\,\,\,\,\,\,\,\,\,\,\,\,\forall k=1,\dots,m_1.\nonumber\\
		\alpha(z_{(v,u)})\hspace{-2mm}&=&\hspace{-2mm}z_{(\phi(v),\phi(u))}+B_{(v,u)},\,\,\,\,\,\,\,\,\,\,\,\,\,\,\,\,\,\,\,\,\,\,\,\,\,\,\,\,\,\,\,\,\,\forall (v,u)\in E(\G),\nonumber\\	
		\alpha_{}(x_v)\hspace{-2mm}&=&\hspace{-2mm}x_{\phi(v)},\,\,\,\,\,\,\,\,\,\,\,\,\,\,\,\,\,\,\,\,\,\,\,\,\,\,\,\,\,\,\,\,\,\,\,\,\,\,\,\,\,\,\,\,\,\,\,\,\,\,\,\,\,\,\,\,\,\,\,\,\,\,\,\,\,\,\,\,\,\,\forall v\in V(\G),\nonumber\\
		\alpha_{}(w_i)	\hspace{-2mm}&=&\hspace{-2mm}w_i,\,\,\,\,\,\,\,\,\,\,\,\,\,\,\,\,\,\,\,\,\,\,\,\,\,\,\,\,\,\,\,\,\,\,\,\,\,\,\,\,\,\,\,\,\,\,\,\,\,\,\,\,\,\,\,\,\,\,\,\,\,\,\,\,\,\,\,\,\,\,\,\,\,\,\,\,\,\,i=1,2,3,4,5,6\nonumber.
	\end{eqnarray}
where $C_{1,\{v',u\}}$, $C_{1,v}$, $C_{1,k}$ are cycles of degree $2314$.
\end{lemma}
\begin{proof}
First, upon invoking Lemma \ref{xz1} and  considering Remark \ref{m0}, we get
	\begin{eqnarray}
	\alpha_{}(t_{1,\{v',u\}})\hspace{-2mm}&=&\hspace{-2mm}t_{1,\{\phi(v'),\phi(u)\}}+C_{1,\{v',u\}},\,\,\,\,\,\,\,\,\,\,\,\,\,\,\,\,\,\,\,\forall v',u\in V(\G)\nonumber\\
	\alpha_{}(t_{1,v})\hspace{-2mm}&=&\hspace{-2mm}t_{1,\phi(v)}+C_{1,v},\,\,\,\,\,\,\,\,\,\,\,\,\,\,\,\,\,\,\,\,\,\,\,\,\,\,\,\,\,\,\,\,\,\,\,\,\,\,\,\,\,\,\,\,\,\,\,\,\forall v\in V(\G)\nonumber\\
	\alpha_{}(t_{1,k})\hspace{-2mm}&=&\hspace{-2mm}t_{1,k}+C_{1,k},\,\,\,\,\,\,\,\,\,\,\,\,\,\,\,\,\,\,\,\,\,\,\,\,\,\,\,\,\,\,\,\,\,\,\,\,\,\,\,\,\,\,\,\,\,\,\,\,\,\,\,\,\,\,\,\forall k=1,\dots,m_1.\nonumber
\end{eqnarray}
Next, in one hand, by  Proposition  \ref{p4}, we know that $\alpha(w_6)=w_6+F$, where the cycle  $F$ is a linear combination of the elements of the base  $\B_{0}$. On the other   hand,   by     (\ref{xa9}),  each element of $\B_{0}$
is a boundary. Thus, by Lemma \ref{l0} and the relation (\ref{dd1}),  the DGL-map $\alpha$ can be chosen, up to homotopy, such that   
$\alpha(w_6)=w_6$. 
\end{proof} 
\begin{lemma}
	\label{zll4} The sub-module $Z_{2314}(\mathcal{L}(\G,1))$  is not trivial. 
\end{lemma}
\begin{proof}
	It is easy to check that the two following brackets
	$$\big[w_3,[w_3,[w_2,(\operatorname{ad} w_{1})^{14}(w_2)]]\big],\,\,\,\,\,\,\,\,\big[x_v,[w_3,[w_3,[w_2,(\operatorname{ad} w_{1})^{8}(w_2)]]]\big]$$
	are  cycles of degree $2314$. 
\end{proof}
\begin{remark}
	\label{xl5}
Since $C_{1,\{v',u\}}$, $C_{1,v}$ and $C_{1,k}$ are cycles of degree $2314$, by Remark \ref{m0},  we can write each of them as a linear combination of the elements of   $\B_{1}$.
\end{remark}
Using  the preceding process, for every  $2 \leq s \leq 24$,  we construct  a   DGL denoted by 
\begin{equation*}
	\mathcal {L}_{}(\G,s)=\mathcal {L}_{}(\G,s-1)\oplus\Big(\L(t_{s,1},\dots,t_{s,m_{s-1}},t_{s,v},t_{s,\{v',u\}};\mid v,v',u\in V(\G),\partial\Big).
\end{equation*}
 that fulfill the following properties  for all $v,v',u\in V(\G)$.
\begin{enumerate}
\item	$\vert t_{s,1}\vert=\dots=\vert t_{s,m_s}\vert=\vert t_{s,v}\vert=\vert t_{s,\{v',u\}}\vert =2313+s.$
\item $\partial(t_{s,1})=y_{s-1,1},\dots,\partial(t_{s,m_{s-1}})=y_{s-1,m_{s-1}},\,\,\,\,\,
\partial(t_{s,v})=y_{s-1,v},\,\,\,\partial(t_{s,\{v',u\}})=y_{s-1,\{v',u\}}$.
\item All the cycles $y_{s-1,1},\dots,y_{s-1,m_s};y_{s-1,v};y_{s-1,\{v',u\}}$  form the base $\B_{s}$ of the sub-module $Z_{2313+s}(\mathcal{L}(\G,s))$ as it is mentioned in Remark \ref{m0}.
\item 	If $[\alpha]\in \mathcal{E}(\mathcal {L}(\G,24))$, then  there exists a unique  $\phi\in\aut(\G)$ such that
\begin{eqnarray}
	\label{dd1}
	\alpha_{}(t_{s,\{v',u\}})\hspace{-2mm}&=&\hspace{-2mm}t_{s,\{\phi(v'),\phi(u)\}},\,\,\,\,\,\,\,\,\,\,\,\,\,\,\,\,\,\,\,\,\,\,\,\,\,\,\,\,\,\,\,\,\,\,\,\,\,\,\,\,\,\,\,\,\,\,\,\,\forall v',u\in V(\G)\nonumber\\
	\alpha_{}(t_{s,v})\hspace{-2mm}&=&\hspace{-2mm}t_{s,\phi(v)},\,\,\,\,\,\,\,\,\,\,\,\,\,\,\,\,\,\,\,\,\,\,\,\,\,\,\,\,\,\,\,\,\,\,\,\,\,\,\,\,\,\,\,\,\,\,\,\,\,\,\,\,\,\,\,\,\,\,\,\,\,\,\,\,\,\,\,\forall v\in V(\G)\nonumber\\
	\alpha_{}(t_{s,k})\hspace{-2mm}&=&\hspace{-2mm}t_{s,k},\,\,\,\,\,\,\,\,\,\,\,\,\,\,\,\,\,\,\,\,\,\,\,\,\,\,\,\,\,\,\,\,\,\,\,\,\,\,\,\,\,\,\,\,\,\,\,\,\,\,\,\,\,\,\,\,\,\,\,\,\,\,\,\,\,\,\,\,\,\,\,\,\,\forall k=1,\dots,m_s.\nonumber\\
	\alpha(z_{(v,u)})\hspace{-2mm}&=&\hspace{-2mm}z_{(\phi(v),\phi(u))}+B_{(v,u)},\,\,\,\,\,\,\,\,\,\,\,\,\,\,\,\,\,\,\,\,\,\,\,\,\,\,\,\,\,\,\,\,\,\forall (v,u)\in E(\G),\\
	\alpha_{}(x_v)\hspace{-2mm}&=&\hspace{-2mm}x_{\phi(v)},\,\,\,\,\,\,\,\,\,\,\,\,\,\,\,\,\,\,\,\,\,\,\,\,\,\,\,\,\,\,\,\,\,\,\,\,\,\,\,\,\,\,\,\,\,\,\,\,\,\,\,\,\,\,\,\,\,\,\,\,\,\,\,\,\,\,\,\,\,\,\,\forall v\in V(\G),\nonumber\\
	\alpha_{}(w_i)	\hspace{-2mm}&=&\hspace{-2mm}w_i,\,\,\,\,\,\,\,\,\,\,\,\,\,\,\,\,\,\,\,\,\,\,\,\,\,\,\,\,\,\,\,\,\,\,\,\,\,\,\,\,\,\,\,\,\,\,\,\,\,\,\,\,\,\,\,\,\,\,\,\,\,\,\,\,\,\,\,\,\,\,\,\,\,\,\,\,\,\,\,i=1,2,3,4,5,6\nonumber.
\end{eqnarray}
\end{enumerate}
\begin{lemma}	
	\label{b9} The sub-module  $Z_{2337}(\mathcal{L}(\G,24))$ is not trivial.
\end{lemma}	
\begin{proof}
	Indeed, for instance,	it is easy to check that the following brackets
	$$[[x_v,x_u],[[x_2,x_4],[x_2,[x_2,w_3]]],\,\,\,\,\,\,\,\,\,\,\,\,\,\,\,\,\,\,\,\,\,\,\,\,\,\,\,\,[[w_2,w_4],(\operatorname{ad} w_{1})^{6}((\operatorname{ad} w_{2})^{3}(x_{v}))],$$
	$$[[w_3,w_5],[[w_2,w_4],[w_3,(\operatorname{ad} w_{1})^{5}(w_{3})\big)]]],$$
	are cycles of degree $2337$. 
\end{proof}
\begin{corollary}
	\label{c4}
As  $B_{(v,u)}$ in \rm(\ref{dd1}) is a cycle, according to Proposition \ref{p4},  it can be written as a linear combination of  elements in   $\B_{24}$.
\end{corollary}
\subsection{Construction of the DGL $\mathcal {L}_{}(\G,25)$}
We extend 	$\mathcal {L}_{}(\G,24)$ by adding generators to  define  the following DGL
\begin{equation*}
	\mathcal {L}_{}(\G,25)=\mathcal {L}_{}(\G,24)\oplus\Big(\L(t_{25,1},\dots,t_{25,m_{24}},t_{25,v},t_{25, \{v,u\}}\mid v,v',u\in V(\G)),\partial\Big).
\end{equation*}
The degrees of the generators  are as follows 
$$\vert t_{25,1}\vert=\dots=\vert t_{25,m_{24}}\vert=\vert t_{25,v}\vert=\vert t_{25,\{v,u\}}\vert=2338,\,\,\,\,\, \forall v,v',u\in V(\G) .$$
The differential is given by
\begin{eqnarray}
	\label{va9}
	\partial(t_{24,1})=y_{23,1},\dots,\partial(t_{24,m_{23}})=y_{23,m_{23}}
\end{eqnarray}	
$$	\partial(t_{25,v})=y_{24,v},\,\,\,\,\,\,\,\,\,\,\,\,\,\,\,\,\,\,
\partial(t_{25,\{v,u\}})=y_{24,\{v,u\}}.$$
where the cycles $y_{24,1},\dots,y_{24,m_{24}},y_{24,v},y_{24,\{v',u\}}$ form a basis of $\B_{23}$ as in  (\ref{z9}). 
\begin{lemma}
	\label{ll4}  The sub-module  $Z_{2338}(\mathcal{L}(\G,25))$ is not trivial.
\end{lemma}
\begin{proof}
Obviously, $\big[[w_3,(\operatorname{ad} w_{2})^{3}(w_4)],[[w_2,w_4],[w_3,(\operatorname{ad} w_{1})^{5}(w_2)]]\big]$
	is a 2338-cycle.
\end{proof}
\begin{lemma}
		\label{ll5}
	If $[\alpha]\in \mathcal{E}(\mathcal {L}(\G,25))$, then  there is a unique  $\phi\in\aut(\G)$ such that
 \begin{eqnarray}
		\label{a2}
		\alpha_{}(t_{25,\{v',u\}})\hspace{-2mm}&=&\hspace{-2mm}t_{25,\{\phi(v'),\phi(u)\}}+C_{25,\{v',u\}},\,\,\,\,\,\,\,\,\,\,\,\,\,\,\,\,\,\,\,\forall v',u\in V(\G)\nonumber\\
		\alpha_{}(t_{25,v})\hspace{-2mm}&=&\hspace{-2mm}t_{25,\phi(v)}+C_{25,v},\,\,\,\,\,\,\,\,\,\,\,\,\,\,\,\,\,\,\,\,\,\,\,\,\,\,\,\,\,\,\,\,\,\,\,\,\,\,\,\,\,\,\,\,\,\,\,\,\forall v\in V(\G)\nonumber\\
		\alpha_{}(t_{25,k})\hspace{-2mm}&=&\hspace{-2mm}t_{25,k}+C_{25,k},\,\,\,\,\,\,\,\,\,\,\,\,\,\,\,\,\,\,\,\,\,\,\,\,\,\,\,\,\,\,\,\,\,\,\,\,\,\,\,\,\,\,\,\,\,\,\,\,\,\,\,\,\,\,\,\forall k=1,\dots,m_{24}.\nonumber\\
		\alpha(z_{(v,u)})\hspace{-2mm}&=&\hspace{-2mm}z_{(\phi(v),\phi(u))},\,\,\,\,\,\,\,\,\,\,\,\,\,\,\,\,\,\,\,\,\,\,\,\,\,\,\,\,\,\,\,\,\,\,\,\,\,\,\,\,\,\,\,\,\,\,\,\,\,\,\,\,\,\,\,\,\,\,\,\,\,\forall (v,u)\in E(\G),\nonumber\\	
		\alpha_{}(x_v)\hspace{-2mm}&=&\hspace{-2mm}x_{\phi(v)},\,\,\,\,\,\,\,\,\,\,\,\,\,\,\,\,\,\,\,\,\,\,\,\,\,\,\,\,\,\,\,\,\,\,\,\,\,\,\,\,\,\,\,\,\,\,\,\,\,\,\,\,\,\,\,\,\,\,\,\,\,\,\,\,\,\,\,\,\,\,\,\,\,\,\,\forall v\in V(\G),\nonumber\\
		\alpha_{}(w_i)	\hspace{-2mm}&=&\hspace{-2mm}w_i,\,\,\,\,\,\,\,\,\,\,\,\,\,\,\,\,\,\,\,\,\,\,\,\,\,\,\,\,\,\,\,\,\,\,\,\,\,\,\,\,\,\,\,\,\,\,\,\,\,\,\,\,\,\,\,\,\,\,\,\,\,\,\,\,\,\,\,\,\,\,\,\,\,\,\,\,\,\,\,\,\,\,i=1,2,3,4,5,6\nonumber.
	\end{eqnarray}
where $C_{25,\{v',u\}}$, $C_{25,v}$, $C_{25,k}$ are $2338$-cycles. 
\end{lemma}
\begin{proof}
First, if we apply Lemma \ref{xz1}, then we get
\begin{eqnarray}
	\alpha_{}(t_{24,\{v',u\}})\hspace{-2mm}&=&\hspace{-2mm}t_{24,\{\phi(v'),\phi(u)\}}+C_{24,\{v',u\}},\,\,\,\,\,\,\,\,\,\,\,\,\,\,\,\,\,\,\,\forall v',u\in V(\G)\nonumber\\
	\alpha_{}(t_{24,v})\hspace{-2mm}&=&\hspace{-2mm}t_{24,\phi(v)}+C_{24,v},\,\,\,\,\,\,\,\,\,\,\,\,\,\,\,\,\,\,\,\,\,\,\,\,\,\,\,\,\,\,\,\,\,\,\,\,\,\,\,\,\,\,\,\,\,\,\,\,\forall v\in V(\G)\nonumber\\
	\alpha_{}(t_{24,k})\hspace{-2mm}&=&\hspace{-2mm}t_{24,k}+C_{24,k},\,\,\,\,\,\,\,\,\,\,\,\,\,\,\,\,\,\,\,\,\,\,\,\,\,\,\,\,\,\,\,\,\,\,\,\,\,\,\,\,\,\,\,\,\,\,\,\,\,\,\,\,\,\,\,\forall k=1,\dots,m_{23}.\nonumber
\end{eqnarray}
Next,  by  Corollary \ref{c4}, we know that  $B_{(v,u)}$ is the linear combination of the elements of  $\B_{24}$. But,    by     (\ref{va9}),    each element of $\B_{24}$
is a boundary. Thus, by Lemma \ref{l0} and  (\ref{dd1}),  the DGL-map $\alpha$ can be chosen, up to homotopy, such that   
$\alpha(z_{(v,u)})=z_{(\phi(v),\phi(u))}$. 
\end{proof}
\begin{remark}
\label{xlx5}
Since $C_{24,\{v,u\}}$, $C_{24,v}$ and $C_{24,k}$ are $2338$-cycles, by Remark \ref{m0} we can write each of them as a linear combination of the elements of   $\B_{25}$.
\end{remark}
\subsection{Construction of the DGL 	$\mathcal {L}_{}(\G,26)$}
We extend 	$\mathcal {L}_{}(\G,25)$ by adding generators to  define  the following DGL
\begin{equation*}
	\mathcal {L}_{}(\G,26)=\mathcal {L}_{}(\G,25)\oplus\Big(\L(t_{26,1},\dots,t_{26,m_{25}},t_{26,v},t_{26, \{v,u\}}\mid v,v',u\in V(\G)),\partial\Big).
\end{equation*}
The degrees of the generators  are as follows 
$$\vert t_{26,1}\vert=\dots=\vert t_{26,m_{25}}\vert=\vert t_{26,v}\vert=\vert t_{26,\{v,u\}}\vert=2339,\,\,\,\,\, \forall v,v',u\in V(\G) .$$
The differential is given by
\begin{eqnarray}
	\label{xva9}
	\partial(t_{26,1})=y_{25,1},\dots,\partial(t_{26,m_{25}})=y_{25,m_{25}}
\end{eqnarray}	
$$	\partial(t_{26,v})=y_{25,v},\,\,\,\,\,\,\,\,\,\,\,\,\,\,\,\,\,\,
\partial(t_{26,\{v,u\}})=y_{25,\{v,u\}}.$$
where the cycles $y_{25,1},\dots,y_{25,m_{25}},y_{25,v},y_{25,\{v',u\}}$ form a basis of $\B_{25}$ as in  (\ref{z9}).
\begin{lemma}
	\label{zll6} The sub-module  $Z_{2339}(\mathcal{L}(\G,26))$ is not trivial. 
\end{lemma}
\begin{proof}
Obviously, $\big[[w_3,[w_2,w_4]],[[w_2,w_4],[[w_2,w_4],(\operatorname{ad} w_{1})^{5}(w_3)]]\big],$
	is a $2339$-cycle.
	\end{proof}
\begin{lemma}
	\label{ll8}
Let $[\alpha]\in \mathcal{E}(\mathcal {L}(\G,26))$. There is a unique  $\phi\in\aut(\G)$ such that
	\begin{eqnarray}
		\label{a18}
		\alpha_{}(t_{26,\{v',u\}})\hspace{-2mm}&=&\hspace{-2mm}t_{26,\{\phi(v'),\phi(u)\}}+C_{26,\{v',u\}},\,\,\,\,\,\,\,\,\,\,\,\,\,\,\,\,\,\,\forall v',u\in V(\G)\nonumber\\
		\alpha_{}(t_{26,v})\hspace{-2mm}&=&\hspace{-2mm}t_{26,\phi(v)}+C_{26,v},\,\,\,\,\,\,\,\,\,\,\,\,\,\,\,\,\,\,\,\,\,\,\,\,\,\,\,\,\,\,\,\,\,\,\,\,\,\,\,\,\,\,\,\,\,\,\forall v\in V(\G)\nonumber\\
		\alpha_{}(t_{26,l})\hspace{-2mm}&=&\hspace{-2mm}t_{26,l}+C_{26,l},\,\,\,\,\,\,\,\,\,\,\,\,\,\,\,\,\,\,\,\,\,\,\,\,\,\,\,\,\,\,\,\,\,\,\,\,\,\,\,\,\,\,\,\,\,\,\,\,\,\,\,\,\,\,\,\forall l=1,\dots,m_{25}.\nonumber\\
		\alpha_{}(t_{25,\{v',u\}})\hspace{-2mm}&=&\hspace{-2mm}t_{25,\{\phi(v'),\phi(u)\}},\,\,\,\,\,\,\,\,\,\,\,\,\,\,\,\,\,\,\,\,\,\,\,\,\,\,\,\,\,\,\,\,\,\,\,\,\,\,\,\,\,\,\,\,\,\,\,\,\,\,\forall v',u\in V(\G)\nonumber\\
		\alpha_{}(t_{25,v})\hspace{-2mm}&=&\hspace{-2mm}t_{25,\phi(v)},\,\,\,\,\,\,\,\,\,\,\,\,\,\,\,\,\,\,\,\,\,\,\,\,\,\,\,\,\,\,\,\,\,\,\,\,\,\,\,\,\,\,\,\,\,\,\,\,\,\,\,\,\,\,\,\,\,\,\,\,\,\,\,\,\,\,\,\forall v\in V(\G)\nonumber\\
		\alpha_{}(t_{25,k})\hspace{-2mm}&=&\hspace{-2mm}t_{25,k},\,\,\,\,\,\,\,\,\,\,\,\,\,\,\,\,\,\,\,\,\,\,\,\,\,\,\,\,\,\,\,\,\,\,\,\,\,\,\,\,\,\,\,\,\,\,\,\,\,\,\,\,\,\,\,\,\,\,\,\,\,\,\,\,\,\,\,\,\,\,\,\,\,\,\forall k=1,\dots,m_{24}.\nonumber
	\end{eqnarray}
where $C_{26,\{v',u\}}$, $C_{26,v}$, $C_{26,l}$ are $2339$-cycles.
\end{lemma}
\begin{proof}
First, if we apply Lemma \ref{xz1}, then we get
\begin{eqnarray}
	\alpha_{}(t_{26,\{v',u\}})\hspace{-2mm}&=&\hspace{-2mm}t_{26,\{\phi(v'),\phi(u)\}}+C_{26,\{v',u\}},\,\,\,\,\,\,\,\,\,\,\,\,\,\,\,\,\,\,\,\forall v',u\in V(\G)\nonumber\\
	\alpha_{}(t_{26,v})\hspace{-2mm}&=&\hspace{-2mm}t_{26,\phi(v)}+C_{26,v},\,\,\,\,\,\,\,\,\,\,\,\,\,\,\,\,\,\,\,\,\,\,\,\,\,\,\,\,\,\,\,\,\,\,\,\,\,\,\,\,\,\,\,\,\,\,\,\,\forall v\in V(\G)\nonumber\\
	\alpha_{}(t_{26,l})\hspace{-2mm}&=&\hspace{-2mm}t_{26,l}+C_{26,l},\,\,\,\,\,\,\,\,\,\,\,\,\,\,\,\,\,\,\,\,\,\,\,\,\,\,\,\,\,\,\,\,\,\,\,\,\,\,\,\,\,\,\,\,\,\,\,\,\,\,\,\,\,\,\,\,\,\forall l=1,\dots,m_{25}.\nonumber
\end{eqnarray}
Next,  from Lemma \ref{ll5}, we know that  $C_{25,\{v,u\}}$, $C_{25,v}$ and $C_{25,k}$ are cycles which can be written as   linear combinations of the elements in  $\B_{25}$ due to Remark \ref{m0}. But ,   by     (\ref{xva9}),    each element of $\B_{25}$ is a boundary. Thus, by Lemma \ref{l0},  the DGL-map $\alpha$ can be chosen, up to homotopy, such that   for every $v,v',u\in V(\G)$ and  $1\leq k\leq m_{24}$ we have
$$\alpha_{}(t_{24,\{v',u\}})=t_{24,\{\phi(v'),\phi(u)\}},\,\,\,
\alpha_{}(t_{24,v})=t_{24,\phi(v)},\,\,\,\,\,
\alpha_{}(t_{24,k})=t_{24,k}.$$ 
as desired.	
\end{proof}
\subsection{Construction of  $\mathcal {L}_{}(\G,27)$}
Define 
\begin{equation*}
\mathcal {L}_{}(\G,27)=\mathcal {L}_{}(\G,26)\oplus\Big(\L(t_{27,1},\dots,t_{27,m_{26}},t_{26,v},t_{27, \{v,u\}}\mid v,v',u\in V(\G)),\partial\Big).
\end{equation*}
The degrees of the generators  are as follows 
$$\vert t_{27,1}\vert=\dots=\vert t_{27,m_{24}}\vert=\vert t_{27,v}\vert=\vert t_{27,\{v,u\}}\vert=2340,\,\,\,\,\, \forall v,v',u\in V(\G) .$$
The differential is given by
\begin{eqnarray}
\label{m2}
\partial(t_{27,1})=y_{26,1},\dots,\partial(t_{27,m_{26}})=y_{27,m_{26}}
\end{eqnarray}	
$$	\partial(t_{27,v})=y_{26,v},\,\,\,\,\,\,\,\,\,\,\,\,\,\,\,\,\,\,
\partial(t_{27,\{v',u\}})=y_{26,\{v',u\}}.$$
where the cycles $y_{26,1},\dots,y_{26,m_{26}},y_{26,v},y_{26,\{v',u\}}$ form a basis of $\B_{26}$ as in  (\ref{z9}).
\begin{lemma}
	\label{xll8}
	Let $[\alpha]\in \mathcal{E}(\mathcal {L}(\G,27))$. There is a unique  $\phi\in\aut(\G)$ such that
	\begin{eqnarray}
		\label{xa18}
		\alpha_{}(t_{27,\{v',u\}})\hspace{-2mm}&=&\hspace{-2mm}t_{27,\{\phi(v'),\phi(u)\}}+C_{27,\{v',u\}},\,\,\,\,\,\,\,\,\,\,\,\,\,\,\,\,\,\,\,\,\forall v',u\in V(\G)\nonumber\\
		\alpha_{}(t_{27,v})\hspace{-2mm}&=&\hspace{-2mm}t_{27,\phi(v)}+C_{27,v},\,\,\,\,\,\,\,\,\,\,\,\,\,\,\,\,\,\,\,\,\,\,\,\,\,\,\,\,\,\,\,\,\,\,\,\,\,\,\,\,\,\,\,\,\,\,\,\,\,\forall v\in V(\G)\nonumber\\
		\alpha_{}(t_{27,l})\hspace{-2mm}&=&\hspace{-2mm}t_{27,l}+C_{27,l},\,\,\,\,\,\,\,\,\,\,\,\,\,\,\,\,\,\,\,\,\,\,\,\,\,\,\,\,\,\,\,\,\,\,\,\,\,\,\,\,\,\,\,\,\,\,\,\,\,\,\,\,\,\,\,\,\,\,\forall l=1,\dots,m_{26}.\nonumber\\
		\alpha_{}(t_{26,\{v',u\}})\hspace{-2mm}&=&\hspace{-2mm}t_{26,\{\phi(v'),\phi(u)\}},\,\,\,\,\,\,\,\,\,\,\,\,\,\,\,\,\,\,\,\,\,\,\,\,\,\,\,\,\,\,\,\,\,\,\,\,\,\,\,\,\,\,\,\,\,\,\,\,\,\,\,\,\,\forall v',u\in V(\G)\nonumber\\
		\alpha_{}(t_{26,v})\hspace{-2mm}&=&\hspace{-2mm}t_{26,\phi(v)},\,\,\,\,\,\,\,\,\,\,\,\,\,\,\,\,\,\,\,\,\,\,\,\,\,\,\,\,\,\,\,\,\,\,\,\,\,\,\,\,\,\,\,\,\,\,\,\,\,\,\,\,\,\,\,\,\,\,\,\,\,\,\,\,\,\,\,\,\,\,\forall v\in V(\G)\nonumber\\
		\alpha_{}(t_{26,k})\hspace{-2mm}&=&\hspace{-2mm}t_{26,k},\,\,\,\,\,\,\,\,\,\,\,\,\,\,\,\,\,\,\,\,\,\,\,\,\,\,\,\,\,\,\,\,\,\,\,\,\,\,\,\,\,\,\,\,\,\,\,\,\,\,\,\,\,\,\,\,\,\,\,\,\,\,\,\,\,\,\,\,\,\,\,\,\,\,\,\,\forall k=1,\dots,m_{25}.\nonumber
	\end{eqnarray}
where $C_{27,\{v',u\}}$, $C_{27,v}$, $C_{27,k}$ are  $2340$-cycles.
\end{lemma}
\begin{proof}
First, if we apply Lemma \ref{xz1}, then we get
\begin{eqnarray}
	\alpha_{}(t_{27,\{v',u\}})\hspace{-2mm}&=&\hspace{-2mm}t_{27,\{\phi(v'),\phi(u)\}}+C_{27,\{v',u\}},\,\,\,\,\,\,\,\,\,\,\,\,\,\,\,\,\,\,\,\forall v',u\in V(\G)\nonumber\\
	\alpha_{}(t_{27,v})\hspace{-2mm}&=&\hspace{-2mm}t_{27,\phi(v)}+C_{27,v},\,\,\,\,\,\,\,\,\,\,\,\,\,\,\,\,\,\,\,\,\,\,\,\,\,\,\,\,\,\,\,\,\,\,\,\,\,\,\,\,\,\,\,\,\,\,\,\,\forall v\in V(\G)\nonumber\\
	\alpha_{}(t_{27,l})\hspace{-2mm}&=&\hspace{-2mm}t_{27,l}+C_{27,l},\,\,\,\,\,\,\,\,\,\,\,\,\,\,\,\,\,\,\,\,\,\,\,\,\,\,\,\,\,\,\,\,\,\,\,\,\,\,\,\,\,\,\,\,\,\,\,\,\,\,\,\,\,\,\,\,\,\forall l=1,\dots,m_{26}.\nonumber
\end{eqnarray}
Next,  from Lemma \ref{ll8}, we know that  $C_{26,\{v,u\}}$, $C_{26,v}$ and $C_{26,k}$ are cycles which can be written as   linear combinations of the elements in  $\B_{26}$ due to Remark \ref{m0}. But,   by     (\ref{m2}),    each element of $\B_{26}$ is a boundary. Thus by Lemma \ref{l0},  the DGL-map $\alpha$ can be chosen, up to homotopy, such that   for every $v,v',u\in V(\G)$ and  $1\leq k\leq m_{25}$ we have
$$\alpha_{}(t_{25,\{v',u\}})=t_{25,\{\phi(v'),\phi(u)\}},\,\,\,
\alpha_{}(t_{25,v})=t_{25,\phi(v)},\,\,\,\,\,
\alpha_{}(t_{25,k})=t_{25,k}.$$ 
as wanted.	
\end{proof}

\medskip
The goal of this paragraph is to show that  $Z_{2340}(\mathcal{L}(\G,26))$  is trivial. Indeed, let $$\B(w^{(k_1)}_1,w^{(k_2)}_2,w^{(k_5)}_3,w^{(k_4)}_4,w^{(k_5)}_5,x^{(k_v)}_v,x^{(k_u)}_u)\,\,\,\,\,\,\,\,\,\,\,\,\,\,\,\,\,\,\,\,\,,\,\,\,\,\,\,\,\,\,\,\,\,\,\,\,\, v\neq u,$$ denote the set of all the brackets of the Hall basis of the DGL $	\mathcal {L}_{}(\G,1)$
 formed exactly using $k_1$ generators $w_1$, $k_2$ generators $w_2$, $k_3$ generators $w_3$, $k_4$ generators $w_4$, $k_5$ generators $w_5$,  $k_v$ generators $x_v$ and  $k_u$ generators $x_u$. 
 \noindent  For instance,  the bracket
 $$[[x_v,x_u],[[x_2,x_4],[x_2,[x_2,w_3]]]\in \B(w^{(0)}_1,w^{(3)}_2,w^{(1)}_3,w^{(1)}_4,w^{(0)}_5,x^{(1)}_v,x^{(1)}_u).$$
 \begin{lemma}
	\label{ll1}
We have the following two statements. 
\begin{enumerate}
	\item Any non-zero linear combination of elements of the set	$$\B(w^{(k_1)}_1,w^{(1)}_2,w^{(k_5)}_3,w^{(2)}_4,w^{(0)}_5,x^{(k_v)}_v,x^{(k_u)}_u),$$ cannot be a cycle.
	\item Any non-zero linear combination of elements of the set	$$\B(w^{(5)}_1,w^{(3)}_2,w^{(0)}_3,w^{(3)}_4,w^{(1)}_5,x^{(0)}_v,x^{(0)}_u),$$ cannot be a cycle.
\end{enumerate}
\end{lemma}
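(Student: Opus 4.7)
The plan is to reduce each assertion to the injectivity of (a component of) $\partial$ on the relevant block of the free Lie algebra, and then to verify that injectivity by expanding in the universal enveloping algebra $\tilde{\mathcal{T}}$ and invoking PBW linear independence of monomials, the same technique used in lemmas \ref{l1} and \ref{l9}. For brackets built only from $w_1,w_2,w_3,w_4,w_5,x_v,x_u$, the only non-trivial contributions to $\partial$ come from the Leibniz-rule substitutions $\partial(w_4)=2w_2^2$ and $\partial(w_5)=2w_3^2$ of (\ref{a45}), so the image of $\partial$ on a Hall basis element is a signed sum of brackets in which exactly one $w_4$ (or one $w_5$) has been replaced by $[w_2,w_2]$ (respectively $[w_3,w_3]$).

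For part (1), since the source has no $w_5$ and a single $w_2$, $\partial$ lands inside the single multi-degree block $\B(w_1^{(k_1)},w_2^{(3)},w_3^{(k_3)},w_4^{(1)},w_5^{(0)},x_v^{(k_v)},x_u^{(k_u)})$, and every monomial arising in the $\tilde{\mathcal{T}}$-expansion of $\partial B$ contains a distinguished adjacent pair $w_2 w_2$ together with an isolated $w_2$. For each Hall basis element $B_i$ I would pick a privileged monomial in the expansion of $B_i$, for instance the one coming from differentiating the leftmost occurrence of $w_4$ in a fixed Hall order, and argue that the position of the $(w_2 w_2)$-pair relative to the remaining $w_4$ recovers $B_i$ uniquely. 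PBW linear independence of monomials in $\tilde{\mathcal{T}}$ then rules out cancellation across different $B_i$.

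For part (2), the differential of each source bracket splits into two pieces of distinct multi-degree: the $\partial w_4$-piece lies in $\B(w_1^{(5)},w_2^{(5)},w_3^{(0)},w_4^{(2)},w_5^{(1)},x_v^{(0)},x_u^{(0)})$ and the $\partial w_5$-piece lies in $\B(w_1^{(5)},w_2^{(3)},w_3^{(2)},w_4^{(3)},w_5^{(0)},x_v^{(0)},x_u^{(0)})$. These live in linearly independent subspaces of $\tilde{\mathcal{L}}$, so a linear combination is a cycle if and only if both components vanish separately; it is therefore enough to show that one of them, say the $\partial w_5$-map, is injective. Since every source bracket contains a unique $w_5$, the position of the resulting adjacent $w_3 w_3$ pair in a privileged $\tilde{\mathcal{T}}$-monomial records the position of the original $w_5$, providing a one-sided inverse and hence the required injectivity.

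The main obstacle is the combinatorial bookkeeping: one must choose, for each Hall basis element, a distinguished monomial in its $\tilde{\mathcal{T}}$-expansion that (a) appears with non-zero coefficient in the boundary, (b) is not produced by any other Hall basis element under $\partial$, and (c) can be located by a position-of-pattern rule stable under signed summation. I expect this is cleanest to organise by fixing a lexicographic Hall ordering on $\{w_1,w_2,w_3,w_4,w_5,x_v,x_u\}$ and inducting on bracket length, mirroring the explicit expansions (\ref{220}) and (\ref{221}) but applied at greater depth.
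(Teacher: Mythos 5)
Your overall strategy coincides with the paper's: expand in the universal enveloping algebra, use that $\partial$ acts only through $w_4\mapsto 2w_2^2$ and $w_5\mapsto 2w_3^2$, and rule out cancellation among the resulting monomials. For part (2) your reduction is actually cleaner than the paper's: splitting $\partial$ into its $\partial w_4$- and $\partial w_5$-components, observing they land in distinct multidegree blocks so must vanish separately, and then noting that $w_5\mapsto w_3^2$ is injective on monomials containing one $w_5$ and no $w_3$ (the unique adjacent $w_3w_3$ pair marks the position of the erased $w_5$, with no possible ambiguity) is a complete argument. The paper instead runs both parts through the same positional-pattern decomposition and only exploits the multidegree splitting later, in the proof that $Z_{2340}$ is trivial.

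Part (1), however, is left at the level of a plan, and the specific reconstruction rule you propose has a gap exactly where you anticipate trouble. You claim that ``the position of the $(w_2w_2)$-pair relative to the remaining $w_4$ recovers $B_i$ uniquely,'' but the block $\B(w^{(k_1)}_1,w^{(1)}_2,w^{(k_3)}_3,w^{(2)}_4,w^{(0)}_5,x^{(k_v)}_v,x^{(k_u)}_u)$ contains one free $w_2$, and when that $w_2$ is adjacent to the newly created pair the target word contains a run $w_2w_2w_2$ with two admissible adjacent pairs. Concretely, the target monomial $a\,w_2w_2w_2\,b\,w_4\,c$ is hit both by $a\,w_4\,w_2\,b\,w_4\,c$ and by $a\,w_2\,w_4\,b\,w_4\,c$ (differentiating the left $w_4$ in each), so contributions from two distinct source monomials can meet on a single target monomial and your ``one-sided inverse'' does not exist as stated; condition (b) of your own checklist is precisely what fails. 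You would need either a triangularity argument with respect to a suitable ordering of monomials, or the paper's device of grouping the tensor expansion of $l$ by the relative positions of the two $w_4$'s into the six shapes $w_4^2A$, $w_4Aw_4$, $Aw_4Bw_4$, etc., and extracting from each shape a boundary term (such as $w_4w_2^2A_1$, with the pair immediately to the right of the surviving $w_4$) that no other shape can produce. Until that cancellation analysis is actually carried out, part (1) is not proved.
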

\begin{proof}
First for the assertion (1), 	by  expanding an arbitrary non-zero element  $$l\in\B(w^{(k_1)}_1,w^{(1)}_2,w^{(k_5)}_3,w^{(2)}_4,w^{(0)}_5,x^{(k_v)}_v,x^{(k_u)}_u),$$ 
in  $\T(w_1,w_2,w_3,w_4,\{x_v\}_{ v\in V(\G)})$, we  can  write $l$ as the following  linear combination 
	$$l=c_1w^{2}_4A_1+c_2w_4A_2w_4+c_3A_3w_4A_4w_4+c_4A_5w_4A_6w_4A_7+c_5A_8w^{2}_4+c_6w_4A_9w_4A_{10},$$
where $c_1,c_2,c_3,c_4,c_5,c_6\in\Z_{(p)}$ and 	where $A_1,\dots,A_{10}$  are (non constant) expressions belonging  to  $\T(w_1,w_2,w_3,\{x_v\}_{ v\in V(\G)})$. In other words,   $A_1,\dots,A_{10}$  are  expressions of monomials composed of the generators $w_1,w_2,w_3$ and $x_v$, where $ v\in V(\G)$. Therefore, using the relations (\ref{a45})  we get
	\begin{eqnarray}
		\label{a3}
		\partial(c_1w^{2}_4A_1)\hspace{-2mm}&=&\hspace{-2mm}2c_1w^{2}_2w_4A_1\pm 2c_1w_4w^{2}_2A_1,\nonumber\\	
		\partial(c_2w_4A_2w_4)\hspace{-2mm}&=&\hspace{-2mm}2c_2w^{2}_2A_2w_4\pm 2c_2w_4A_2w^{2}_2,\nonumber\\
		\partial(c_3A_3w_4A_4w_4)\hspace{-2mm}&=&\hspace{-2mm}2c_3A_3w^{2}_2A_4w_4\pm 2c_3A_3w_4A_4w^{2}_2,\nonumber\\
		\partial(c_4A_5w_4A_6w_4A_7)\hspace{-2mm}&=&\hspace{-2mm}2c_4A_5w^{2}_2A_6w_4A_7\pm 2c_4A_5w_4A_6w^{2}_2A_7,\nonumber\\
		\partial(c_5A_8w^{2}_4)\hspace{-2mm}&=&\hspace{-2mm}2c_5A_8w^{2}_2w_4\pm 2c_5A_8w_4w^{2}_2,\nonumber\\	
		\partial(c_6w_4A_9w_4A_{10})\hspace{-2mm}&=&\hspace{-2mm}2c_6w^{2}_2A_9w_4A_{10}\pm 2c_6w_4A_9w^{2}_2A_{10}.
	\end{eqnarray}
Clearly, as the generator $w_4$ is not involved in the expressions of $A_1,\dots,A_{10}$, it follows that  $ 2c_1w_4w^{2}_2A_1$ does not appear in other expressions in (\ref{a3}). This implies that $c_1=0$. Likewise,  the expression $c_2w_4A_2w^{2}_2$ does not appear in the other expressions in (\ref{a3}). So $c_2=0$. Repeating the same argument, we conclude that   if $\partial(l)=0$, then $c_1=c_2=c_3=c_4=c_5=c_6=0$.	

\medskip
Next, for the assertion (2), by using the same argument	as in the first assertion	and   expanding an arbitrary non-zero element  $$l'\in\B(w^{(5)}_1,w^{(3)}_2,w^{(0)}_3,w^{(3)}_4,w^{(1)}_5,x^{(0)}_v,x^{(0)}_u),$$ 
in  $\T(w_1,w_2,w_3,w_4,w_5,\{x_v\}_{ v\in V(\G)})$ in this case, we  can  write $l'$ as a linear combination
	$$l'=b_1w^{2}_4B_1+b_2w_4B_2w_4+b_3B_3w_4B_4w_4+b_4B_5w_4B_6w_4B_7+b_5B_8w^{2}_4+b_6w_4B_9w_4B_{10},$$
	where $b_1,b_2,b_3,b_4,b_5,b_6\in\Q$ and 	where $B_1,\dots,B_{10}$    are (non constant)  expressions of monomials composed of the generators $w_1,w_2,w_3,w_5$ and $x_v$, where $ v\in V(\G)$. 
	%Notice that since $l'$ is a bracket formed using only one generator $w_5$.  
	%it follows that
	%\begin{itemize}
	%	\item Only $B_3$ or $B_{4}$ is an expression of generator $w_5$
	%	\item Only $B_5$ or $B_{6}$ or $B_{7}$ is an expression of generator $w_5$
	%	\item Only $B_9$ or $B_{10}$ is an expression of generator $w_5$
	%\end{itemize}
Therefore,  using  (\ref{a45}) we obtain
\begin{eqnarray}
	\label{a33}
	\partial(b_1w^{2}_4B_1)\hspace{-2mm}&=&\hspace{-2mm}2b_1w^{2}_2w_4B_1\pm 2b_1w_4w^{2}_2B_1\pm b_1w_4w^{2}_2\partial(B_1),\\	
	\partial(b_2w_4B_2w_4)\hspace{-2mm}&=&\hspace{-2mm}2b_2w^{2}_2B_2w_4\pm	b_2w_4\partial(B_2)w_4\pm 2b_2w_4B_2w^{2}_2,\nonumber\\
	\partial(b_3B_3w_4B_4w_4)\hspace{-2mm}&=&\hspace{-2mm}b_3\partial(B_3)w_4B_4w_4\pm 2b_3B_3w_2^{2}B_4w_4\pm b_3B_3w_4\partial(B_4)w_4\pm\nonumber\\&& 2b_3B_3w_4B_4w^{2}_2,\nonumber\\
	\partial(b_4B_5w_4B_6w_4B_7)\hspace{-2mm}&=&\hspace{-2mm}b_4\partial(B_5)w_4B_6w_4B_7\pm 2b_4B_5w_2^2B_6w_4B_7
	\pm b_4B_5w_4\partial(B_6)w_4B_7\pm \nonumber\\&&2b_4B_5w_4B_6w_2^2B_7\pm b_4B_5w_4B_6w_4\partial(B_7),\nonumber\\
	\partial(b_5B_8w^{2}_4)\hspace{-2mm}&=&\hspace{-2mm}b_5\partial(B_8)w^{2}_4\pm 2b_5B_8w^{2}_2w_4\pm 2b_5B_8w_4w^{2}_2,\nonumber\\	
	\partial(b_6w_4B_9w_4B_{10})\hspace{-2mm}&=&\hspace{-2mm}2b_6w_2^2B_9w_4B_{10}\pm b_6w_4\partial(B_9)w_4B_{10}\pm 2b_6w_4B_9w_2^2B_{10}\pm\nonumber\\
	&& b_6w_4B_9w_4\partial(B_{10})\nonumber.
\end{eqnarray}
Likewise, as the generator $w_4$ is not involved in the expressions of $B_1,\dots,B_{10}$ and $\partial(B_1),\dots,\partial(B_{10})$, it follows that  $2b_1w_4w^{2}_2B_1$ does not appear in the other expressions in (\ref{a33}). This implies that $b_1=0$. Next,  the expression $2b_2w_4B_2w^{2}_2$ does not appear in the other expressions in (\ref{a33}). So $b_2=0$. Repeating the same argument, we conclude that   if $\partial(l')=0$, then $b_3=b_4=b_5=b_6=0$.
\end{proof}	
\begin{lemma}
	\label{ll77} The module $\mathcal{L}_{2340}(\G,27)$ is  spanned by   brackets belonging to  the following two sets
	\begin{equation}\label{bb8}
		\B(w^{(5)}_1,w^{(1)}_2,w^{(2)}_3,w^{(4)}_4,w^{(0)}_5,x^{(0)}_v,x^{(0)}_u)\,\,,\,\,\B(w^{(5)}_1,w^{(3)}_2,w^{(0)}_3,w^{(3)}_4,w^{(1)}_5,x^{(0)}_v,x^{(0)}_u).
	\end{equation}
\end{lemma}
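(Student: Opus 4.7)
The strategy mirrors the Frobenius-equation arguments already used in Lemmas \ref{b0}, \ref{b9}, \ref{ll4}, and \ref{ll6}. The idea is to enumerate all non-negative integer solutions of the Frobenius equation encoding the generator multiplicities in a degree-$2340$ bracket of $\mathcal{L}(\G,3)$, and to conclude that exactly the two shape patterns listed in (\ref{bb8}) survive.

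First I would set up the full Frobenius equation for the generators of $\mathcal{L}(\G,3)$:
$$115a_1+151a_2+201a_3+303a_4+403a_5+690a_6+2337a_7+2338a_8+2339a_9=2340,$$
where $a_1,\dots,a_5$ count the generators $w_1,\dots,w_5$, while $a_6,\dots,a_9$ count, respectively, $\{x_v\}_{v}$, $\{z_{(v,u)}\}$, $\{t_{1,i},t_v,t_{(v,u)}\}$, and $\{t_{2,k}\}$. Any bracket containing one of the generators of degree at least $2337$ would require a complementary part of degree $1$, $2$, or $3$, which does not exist in the dgl, so $a_7=a_8=a_9=0$ at once.

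Next, I would dispose of $a_6$. Running \texttt{FrobeniusSolve} exactly as in Lemma \ref{b0} on the three reduced equations $115a_1+\cdots+403a_5=2340-690a_6$ for $a_6\in\{1,2,3\}$ (and noting that $a_6\geq 4$ is impossible since $4\cdot 690>2340$) returns no solutions. Hence $a_6=0$, and we are reduced to
$$115a_1+151a_2+201a_3+303a_4+403a_5=2340.$$
A final \texttt{FrobeniusSolve} call yields exactly the two non-negative solutions $(5,1,2,4,0)$ and $(5,3,0,3,1)$, which are precisely the multiplicities defining the two bracket sets in (\ref{bb8}). Thus every element of $\mathcal{L}_{2340}(\G,3)$ is a $\Z_{(p)}$-linear combination of Hall-basis brackets drawn from the two sets in (\ref{bb8}), which is the spanning assertion of the lemma.

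The main obstacle is the Frobenius enumeration itself, which is purely computational; the paper already commits to delegating such enumerations to WOLFRAM, so this step is routine in spirit. A by-hand verification would amount to checking, for each fixed $(a_4,a_5)$, the attainable residues of $151a_2+201a_3$ modulo $115$ and then reading off the admissible $a_1$ — lengthy but entirely elementary, and with the Frobenius number of $\{115,151,201,303,403\}$ being quite moderate, only finitely many cases need to be examined.
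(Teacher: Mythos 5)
Your proposal is correct and follows essentially the same route as the paper: both reduce the claim to the Frobenius equation $115a_1+\cdots+2339a_9=2340$ and enumerate its nonnegative solutions (the paper feeds the full nine-variable equation to \texttt{FrobeniusSolve}, while you first eliminate $a_6,\dots,a_9$ by elementary degree considerations, a harmless refinement). Incidentally, your solutions correctly have $a_7=0$, whereas the paper's displayed solutions list $a_7=4$, which is evidently a typographical error since $4\cdot 2337>2340$.
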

\begin{proof}
	First, recall that, by construction,  the DGL $\mathcal {L}(\G,26)$ is formed by elements of degrees
	$$115,151,201,303,403,690,q, \,\,\,\,\,\,\,\,\,\,\,2313\leq q\leq 2340.$$
	Therefore, for degree reasons, there is no bracket in $\mathcal{L}_{2340}(\G,26)$ formed with  a generator of degree $q$. Next, if  $\Theta\in\mathcal {L}_{2340}(\G,3)$,	then  we  get
	$$\vert \Theta\vert=	115a_1 +151a_2+201a_3+303a_4+403a_5+690a_6,$$
	where $a_1,\dots,a_6\in \{0,1,2,\dots\}$ yielding  the following  Frobenius equation 
	\begin{equation*}\label{bb6}
		115a_1 +151a_2+201a_3+303a_4+403a_5+690a_6=2340.
	\end{equation*}
	Using WOLFRAM software,  we get only two  solutions  which are
	providing  only two  solutions  which are
	$$a_1=5,\, \,\,a_2=1\,\,, \,\, a_3=2,\, \,\, a_4=4,\, \,\, a_5=0,\, \,\, a_6=0,$$
	$$a_1=5,\, \,\, a_2=3,\, \,\, a_3=0,\, \,\, a_4=3,\, \,\, a_5=1,\, \,\, a_6=0,$$
	as desired.
\end{proof}
\begin{lemma}
	\label{ll7} The submodule $Z_{2340}(\mathcal{L}(\G,27))$  is trivial.
\end{lemma}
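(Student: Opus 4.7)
The plan is to invoke Lemma \ref{ll77} to write any $l \in Z_{2340}(\mathcal{L}(\G,3))$ as $l = l_1 + l_2$, where $l_1$ is a $\Z_{(p)}$-linear combination of brackets in $\B_1 := \B(w^{(5)}_1,w^{(1)}_2,w^{(2)}_3,w^{(4)}_4,w^{(0)}_5,x^{(0)}_v,x^{(0)}_u)$ and $l_2$ of brackets in $\B_2 := \B(w^{(5)}_1,w^{(3)}_2,w^{(0)}_3,w^{(3)}_4,w^{(1)}_5,x^{(0)}_v,x^{(0)}_u)$. I then separate the vanishing conditions by a generator-count argument in the universal graded algebra $\tilde{\mathcal{T}}$.

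The key observation uses that $\partial$ acts trivially on $w_1, w_2, w_3$ and on every $x_v$, and satisfies $\partial(w_4) = 2w_2^2$ and $\partial(w_5) = 2w_3^2$ in $\tilde{\mathcal{T}}$. I split $\partial(l_2) = \partial_4(l_2) + \partial_5(l_2)$ according to which generator is differentiated, and track the multi-index $(k_1,k_2,k_3,k_4,k_5)$ counting occurrences of each $w_i$ in the resulting monomial. Direct inspection shows $\partial(l_1)$ lies in the multi-type $(5,3,2,3,0)$, that $\partial_4(l_2)$ lies in type $(5,5,0,2,1)$, and that $\partial_5(l_2)$ also lies in type $(5,3,2,3,0)$. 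Hence the hypothesis $\partial(l) = 0$ forces $\partial_4(l_2) = 0$ on its own, while the remaining relation $\partial(l_1) + \partial_5(l_2) = 0$ is handled separately.

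From $\partial_4(l_2) = 0$, I would apply the positional-expansion argument of Lemma \ref{ll1}(2) (treating the single $w_5$ as a passive generator inside the $B_i$ factors since it is not differentiated in $\partial_4$) to conclude that every coefficient of $l_2$ vanishes, so $l_2 = 0$; in particular $\partial_5(l_2) = 0$ and we are reduced to $\partial(l_1) = 0$. For the latter I would extend the technique of Lemma \ref{ll1}(1) to the case of four $w_4$'s: expand each element of $\B_1$ in $\tilde{\mathcal{T}}$ as a linear combination of arrangements of four $w_4$'s interleaved with non-constant monomials $A_i \in \T(w_1,w_2,w_3)$. Applying $\partial(w_4) = 2w_2^2$ to each occurrence in turn, the terms produced by differentiating an outermost (leftmost or rightmost) $w_4$ yield monomials of the form $w_2^2(\text{rest})$ or $(\text{rest})w_2^2$ whose position of the freshly-created $w_2^2$ cannot be produced by differentiating any other $w_4$ in any other summand. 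This forces the corresponding coefficients to vanish, and iterating on the remaining inner positions kills the rest, giving $l_1 = 0$.

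The main obstacle is the combinatorial bookkeeping required for $l_1$, since four $w_4$'s generate many more expansion patterns than the two $w_4$'s handled in Lemma \ref{ll1}(1); however, the underlying positional uniqueness argument carries over without conceptual change. All other steps are immediate from the preceding lemmas and the Frobenius enumeration of Lemma \ref{ll77}.
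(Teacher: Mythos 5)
Your argument is essentially the paper's own proof: decompose via the Frobenius enumeration of Lemma~\ref{ll77} into the two multi-types, observe that the $w_4$-differential of the $\B(w^{(5)}_1,w^{(3)}_2,w^{(0)}_3,w^{(3)}_4,w^{(1)}_5,\dots)$ part is the unique component of $\partial(l)$ containing $w_5$ and hence must vanish separately, and then kill each piece by the positional-expansion argument of Lemma~\ref{ll1}. Your type bookkeeping is in fact slightly more careful than the paper's (you correctly place $\partial_5(l_2)$ in type $(5,3,2,3,0)$ alongside $\partial(l_1)$, and you explicitly flag that Lemma~\ref{ll1}(1), stated for two occurrences of $w_4$, must be extended to four), but the route and the key ideas are the same.
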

\begin{proof}
	First, 	according to Lemma \ref{ll77},   $\mathcal{L}_{2340}(\G,26)$ is spanned by   brackets belonging to  the  sets (\ref{bb8}). 
	Next, by Lemma \ref{ll1}, any linear combination of elements of the set
	$B(w^{(5)}_1,w^{(1)}_2,w^{(2)}_3,w^{(4)}_4,w^{(0)}_5,x^{(0)}_v,x^{(0)}_u)$ or  $B(w^{(5)}_1,w^{(3)}_2,w^{(0)}_3,w^{(3)}_4,w^{(1)}_5,x^{(0)}_v,x^{(0)}_u)$ cannot be a cycle.
Finally, let
$$\theta=(\gamma_1l_{1}+\dots+\gamma_{k_1}l_{k_1})+(\mu_1h_{1}+\dots+\mu_{k_2}h_{k_2}),\,\,\,\,\,\,\,\,\,\,\,\gamma_1,\dots,\gamma_{k_1};\mu_1,\dots,\mu_{k_2}\in\Z_{(p)},$$ 
a linear combination such that  
$$l_{1},\dots,l_{k_1}\in\B(w^{(5)}_1,w^{(1)}_2,w^{(2)}_3,w^{(4)}_4,w^{(0)}_5,x^{(0)}_v,x^{(0)}_u),$$
$$h_{1},\dots,h_{k_2}\in\B(w^{(5)}_1,w^{(3)}_2,w^{(0)}_3,w^{(3)}_4,w^{(1)}_5,x^{(0)}_v,x^{(0)}_u).$$
Applying the differential $\partial$ we get
	\begin{equation*}
		\label{a24}
		\partial(\theta)=\gamma_1\partial(l_1) +\dots +\gamma_{k_1}\partial(l_{k_1})+\mu_1\partial(h_1) +\dots +\mu_{k_2}\partial(h_{k_2}).
	\end{equation*}
Let's write $\partial(h_j)=h_{j,1}+h_{j,2},$ where
$$\partial(l_1),\dots,\partial(l_{k_1})\in \B(w^{(5)}_1,w^{(3)}_2,w^{(2)}_3,w^{(3)}_4,w^{(0)}_5,x^{(0)}_v,x^{(0)}_u),$$
$$h_{1,1},\dots,h_{k_2,1}\in \B(w^{(5)}_1,w^{(5)}_2,w^{(0)}_3,w^{(2)}_4,w^{(1)}_5,x^{(0)}_v,x^{(0)}_u),$$
$$h_{1,2},\dots,h_{k_2,2}\in \B(w^{(5)}_1,w^{(5)}_2,w^{(2)}_3,w^{(2)}_4,w^{(0)}_5,x^{(0)}_v,x^{(0)}_u).$$
All the  brackets $h_{1,1},\dots,h_{k_2,1}$ are formed using only one  generator $w_{5}$,  while in the  other elements $w_{5}$   is not involved, so if $\theta$ is a cycle, then $\mu_1=\dots=\mu_{k_2}=0$ implying also that $\gamma_1=\dots=\gamma_{k_1}=0$. Hence,  the only $2340$-cycle  in $\mathcal{L}(\G,27)$ is zero.
\end{proof}
\begin{corollary}
	\label{xll9}
The cycles $C_{27,\{v',u\}}$, $C_{27,v}$, $C_{27,k}$, given in Lemma \rm\ref{xll8} are  trivial.
\end{corollary}
\begin{proof}
	It suffices to apply Lemma \ref{ll7}.
\end{proof}
As a consequence of Remark \ref{xl5}, Lemmas  \ref{ll5}, \ref{ll8},  \ref{xll8} and Corollary \ref{xll9},  we can define   a  group  homomorphism $\mathcal{E}(\mathcal {L}(\G,27))\to \aut(\G)$ by setting $\Psi([\alpha_{}])=\phi.$
\begin{theorem}\label{t2}
	The  map $\Psi$ is an isomorphism of groups.
\end{theorem}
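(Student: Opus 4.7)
The plan is to establish that $\Psi$ is both injective and surjective, since the homomorphism property has already been set up before the theorem statement.

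For surjectivity, I would construct an explicit inverse map. Given $\phi\in\aut(\G)$, define a dgl endomorphism $\alpha_\phi$ of $\mathcal{L}(\G,4)$ by $\alpha_\phi(w_i)=w_i$ for $i=1,\dots,5$, $\alpha_\phi(x_v)=x_{\phi(v)}$, $\alpha_\phi(z_{(v,u)})=z_{(\phi(v),\phi(u))}$, $\alpha_\phi(t_{1,k})=t_{1,k}$, $\alpha_\phi(t_v)=t_{\phi(v)}$, $\alpha_\phi(t_{(v,u)})=t_{(\phi(v),\phi(u))}$, $\alpha_\phi(t_{2,k})=t_{2,k}$, and $\alpha_\phi(t_{3,k})=t_{3,k}$. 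Because $\phi$ is a digraph automorphism, it permutes the edge set $E(\G)$, so the formulas for $\partial(z_{(v,u)})$, $\partial(t_v)$, $\partial(t_{(v,u)})$ are respected term by term (the expressions $X_v, Y, Z$ and the cycles $y_v, y_{(v,u)}$ transform symbolically as their subscripts do). The cycles $y_{1,k}, y_{2,k}, y_{3,k}$ lie in $\L(w_1,\dots,w_5)$ and are fixed. A direct check confirms $\alpha_\phi\circ\partial=\partial\circ\alpha_\phi$, so $[\alpha_\phi]\in\E(\mathcal{L}(\G,4))$, and by construction $\Psi([\alpha_\phi])=\phi$.

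For injectivity, suppose $[\alpha]\in\ker\Psi$, so the associated graph automorphism is $\phi=\mathrm{id}_{\G}$. Combining corollary \ref{c1} and lemmas \ref{ll5}, \ref{ll8}, \ref{ll9} (applied with $\phi=\mathrm{id}$), I may replace $\alpha$ by a homotopic representative that is the identity on all generators $w_i$, $x_v$, $z_{(v,u)}$, $t_{1,k}$, $t_v$, $t_{(v,u)}$, and $t_{2,k}$. The only generators still to be controlled are the $t_{3,k}$ in degree $2340$.

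Write $\alpha(t_{3,k})=\sum_{i=1}^{m_3}\zeta_{3,i}\,t_{3,i}+C_{3,k}$, where $C_{3,k}$ is the decomposable part (in $\mathcal{L}(\G,4)$) of degree $2340$. Since every nonzero bracket involving a $t_{3,j}$ has degree at least $2340+115>2340$, the element $C_{3,k}$ actually lies in $\mathcal{L}(\G,3)$. Applying $\partial$ and using that $\alpha$ is the identity on $\L(w_1,\dots,w_5)$, the relation $\partial(\alpha(t_{3,k}))=\alpha(\partial(t_{3,k}))=\alpha(y_{3,k})=y_{3,k}$ forces $\zeta_{3,k}=1$, $\zeta_{3,i}=0$ for $i\neq k$, and $\partial(C_{3,k})=0$. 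Thus $C_{3,k}$ is a cycle of degree $2340$ in $\mathcal{L}(\G,3)$, which by lemma \ref{ll7} is trivial. Hence $\alpha(t_{3,k})=t_{3,k}$ for every $k$, so $\alpha=\mathrm{id}$ and $[\alpha]=[\mathrm{id}]$.

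The main technical obstacle was already absorbed into the preceding sequence of lemmas (progressively killing the potentially nontrivial cycle groups in degrees $2337, 2338, 2339$ via the construction of $\mathcal{L}(\G,2),\mathcal{L}(\G,3),\mathcal{L}(\G,4)$, and finally proving that $Z_{2340}(\mathcal{L}(\G,3))$ is trivial in lemma \ref{ll7}). Consequently, the remaining argument for the theorem is essentially bookkeeping: a direct surjectivity construction and a degree-based argument forcing $C_{3,k}$ into $\mathcal{L}(\G,3)$ where the cycle-vanishing result applies.
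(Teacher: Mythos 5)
Your proposal is correct and follows essentially the same route as the paper: your surjectivity construction $\phi\mapsto\alpha_\phi$ is exactly the paper's map $\Phi(\sigma)=[\alpha_\sigma]$, and your injectivity argument rests on the same chain of results (Corollary \ref{c1} and Lemmas \ref{ll5}, \ref{ll8}, \ref{ll9}) together with the vanishing of $Z_{2340}(\mathcal{L}(\G,3))$ from Lemma \ref{ll7}. The only difference is one of completeness: the paper simply asserts that $\Phi$ is the inverse of $\Psi$, whereas you verify injectivity explicitly and in particular supply the check on the generators $t_{3,k}$ (which the paper's lemmas never treat), showing their decomposable correction term $C_{3,k}$ is a degree-$2340$ cycle in $\mathcal{L}(\G,3)$ and hence zero.
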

\begin{proof}
	For every $\sigma\in \aut(\G)$, we define
	\begin{eqnarray}
		\label{181}
		\alpha_{\sigma}(t_{s,k})\hspace{-2mm}&=&\hspace{-2mm}t_{s,k},\,\,\,\,\,\,\,\,\,\,\,\,\,\,\,\,\,\,\,\,\,\,\,\,\,\,\,\,\,\,\,\,\,\,\,\,\,\,\,\,\,\,\,\,\,\,\,\,\,\,\,\,\,\,\,\forall k=1,\dots,m_{s-1},\,\,\,\,\,\,\,\,\,\forall s=1,\dots,27,\nonumber\\
		\alpha_{\sigma}(t_{s,\{v,u\}})\hspace{-2mm}&=&\hspace{-2mm}t_{s,\{\sigma(v),\sigma(u)\}},\,\,\,\,\,\,\,\,\,\,\,\,\,\,\,\,\,\,\,\,\,\,\,\,\,\,\,\,\,\,\,\,\,\forall v,u\in V(\G),\,\,\,\,\,\,\,\,\,\,\,\,\,\,\,\,\,\,\,\,\forall s=1,\dots,27,\nonumber\\
		\alpha_{\sigma}(t_{s,v})\hspace{-2mm}&=&\hspace{-2mm}t_{s,\sigma(v)},\,\,\,\,\,\,\,\,\,\,\,\,\,\,\,\,\,\,\,\,\,\,\,\,\,\,\,\,\,\,\,\,\,\,\,\,\,\,\,\,\,\,\,\,\,\,\,\,\,\forall v\in V(\G),,\,\,\,\,\,\,\,\,\,\,\,\,\,\,\,\,\,\,\,\,\,\,\,\forall s=1,\dots,27,\nonumber\\	
		\alpha_{\sigma}(z_{(v,u)})\hspace{-2mm}&=&\hspace{-2mm}z_{(\sigma(v),\sigma(u))},\,\,\,\,\,\,\,\,\,\,\,\,\,\,\,\,\,\,\,\,\,\,\,\,\,\,\,\,\,\,\,\,\,\,\,\,\,\,\forall (v,u)\in E(\G),\nonumber\\
		\alpha_{\sigma}(x_v)\hspace{-2mm}&=&\hspace{-2mm}x_{\sigma(v)},\,\,\,\,\,\,\,\,\,\,\,\,\,\,\,\,\,\,\,\,\,\,\,\,\,\,\,\,\,\,\,\,\,\,\,\,\,\,\,\,\,\,\,\,\,\,\,\,\,\,\,\,\forall v\in V(\G),\nonumber\\
		\alpha_{\sigma}(w_i)	\hspace{-2mm}&=&\hspace{-2mm}w_i,\,\,\,\,\,\,\,\,\,\,\,\,\,\,\,\,\,\,\,\,\,\,\,\,\,\,\,\,\,\,\,\,\,\,\,\,\,\,\,\,\,\,\,\,\,\,\,\,\,\,\,\,\,\,\,\,\,\,\,\,i=1,2,3,4,5,6\nonumber.
	\end{eqnarray}
	Clearly, we have $\partial\circ \alpha_{\sigma}=\alpha_{\sigma}\circ \partial$ implying that   $[\alpha_{\sigma}]\in \mathcal{E}(\mathcal {L}(\G,27))$. Hence, we get a map $$\Phi:\aut(\G)\to \mathcal{E}(\mathcal {L}(\G,27)),\,\,\,\,\,\,\,\,\,\,\,\,\,\,\,\,\,\,\,\,\,\,\,\,\,\, \Phi(\sigma)=[\alpha_{\sigma}],$$  and it is easy to check that it is  the inverse  of $\Psi$. Finally,  $\Phi$ is a homomorphism of groups because we have
	$\Phi(\sigma_1\circ\sigma_2)=[\alpha_{\sigma_1\circ\sigma_2}]=[\alpha_{\sigma_1}]\circ[\alpha_{\sigma_2}]=\Phi(\sigma_1)\circ\Phi(\sigma_2),$
	for all   $\sigma_1,\sigma_2\in \aut(\G)$.
\end{proof}
As a consequence of Theorem \ref{t2}, we derive the following   result.
\begin{theorem}
	\label{01}
For any group $G$ and  any prime  $p>1114$, there exists a  CW-complex $X$ such that  $G\cong\E(X_{(p)})$, where $X_{(p)}$ is the $p$-localization of $X$. More precisely:  
\begin{enumerate}
	\item $X$ is an $116$-connected, $2341$-dimensional, and of finite type if  $G$ is finite.
	%\item If $n_1\neq n_2$, then $X_{n_1}$ and $X_{n_2}$ are not homotopic;
	\item $H_{i}(X,\Z_{(p)})$ is a free $\Z_{(p)}$-modules over a  basis which in bijection with $G$ for 
	$$i=691,q, \,\,\,\,\,\,\,2314\leq q\leq 2341.$$
	\item  $H_{i}(X,\Bbb Z_{(p)})\cong \Z_{(p)}$, for $i\in\{116,152,202,304,404,2314\}$.
\end{enumerate}	
\end{theorem}
\begin{proof}
	First, by  Theorem  \ref{t5},  to the group $G$ corresponds a strongly connected digraph  $\G$ such that $\aut(\G)\cong G$. Next,  to the  graph $\G$, we can  assign the DGL  $\mathcal {L}(\G,27)$. Then, by Theorem \ref{t2}, we get $\mathcal{E}(\mathcal {L}(\G,27))\cong\aut(\G)$. Finally,  using the Anick’s  $\Z_{(p)}$-local homotopy theory framework, to  $\mathcal {L}(\G,27)$ corresponds  an object  $X$ in the category $\textbf{DGL}^{2341}
	_{115}(\Bbb Z_{(p)})$ satisfying 
	$\mathcal{E}(X)\cong\mathcal{E}(\mathcal {L}(\G,27))$ according to  the identifications (\ref{2}). Consequently,
	 $$\mathcal{E}(X)\cong\mathcal{E}(\mathcal {L}(\G,27))\cong\aut(\G)\cong G,$$
	 as desired.
\end{proof}
\begin{remark}
By virtue of Anick's theory, the generators of the DGL $\mathcal {L}(\G,27)$ are in correspondence with the   $\Z_{(p)}$-localised  cells	of the  CW-complex $X$ constructed in Theorem \ref{01}. Thus, $X$ is finite if and only if the  group $G$ is finite.  
\end{remark}
\section*{Acknowledgments}
The author expresses sincere gratitude to the referee for their meticulous reading of the paper and for providing valuable suggestions that have significantly enhanced the manuscript.\\

%\textbf{Competing interests declaration}\\

% The author declares none.

\bibliographystyle{amsplain}

\end{document}